\numberwithin{equation}{section}
      \newtheorem{assumption}{Assumption}
      \newtheorem{remark}{Remark}
\newcommand{\rd}{\,\mathrm{d}}
\newcommand{\ri}{\text{i}}
\newcommand{\rev}[1]{\textcolor{black}{#1}}
\title{Sensitivity analysis of Burgers' equation with shocks 
\thanks{
\funding{The work of Q.L.~is supported in part by National Science Foundation under the grant
  DMS-1619778, DMS-1740707, and DMS-1107291: RNMS KI-Net. The work of J.L. is supported in part by National
  Science Foundation under the grant DMS-1812573 and DMS-1107444: RNMS KI-Net. The work of R. S. is supported in part by National Science Foundation under the grant DMS-1107291: RNMS KI-Net.}}}
\author{Qin Li\thanks{Mathematics Department, University of Wisconsin-Madison, 480 Lincoln Dr., Madison, WI 53706 USA.
		\email{qinli@math.wisc.edu}}
\and Jian-Guo Liu\thanks{Department of Mathematics and Department of Physics, Duke University, Durham, NC 27708 USA. \email{jliu@phy.duke.edu}}
\and Ruiwen Shu\thanks{Department of Mathematics, University of Maryland, College Park, 4176 Campus Dr., College Park, MD 20742 USA. \email{rshu@cscamm.umd.edu}} }
\date{\today}
\begin{document}
\maketitle

\begin{abstract}
Generalized polynomial chaos (gPC) method has been extensively used in uncertainty quantification problems where equations contain random variables. {For gPC to achieve high accuracy, }PDE solutions need to have high regularity in the random space, but this is what hyperbolic type problems cannot provide. We provide a counter-argument in this paper, and show that even though the solution profile develops singularities in the random space, which {destroys the spectral accuracy} of gPC, the physical quantities (such as the shock emergence time, the shock location, and the shock strength) are all smooth functions of the uncertainties coming from both initial data and the wave speed: with proper shifting, the solution's polynomial interpolation approximates the real solution accurately, and the error decays as the order of the polynomial increases. {Therefore this work provides a new perspective to ``quantify uncertainties" and significantly improves the accuracy of the gPC method with a slight reformulation.} We use the Burgers' equation as an example for thorough analysis, and the analysis could be extended to general conservation laws with convex fluxes. 
\end{abstract}

\section{Introduction}
{Hyperbolic conservation laws describe \rev{many} important physics balance laws such as conservation of mass, momentum, and energy. \rev{They describe} various important continuum physics  including wave propagation and wave interactions.} It is a very classical mathematical subject that has a long tradition tracing back to Euler. In all these studies, the equations are deterministic, with prescribed boundary and initial conditions. Typically there are parameters in the equations that are simply pre-determined using constitutive laws. However, from a realistic point of view, {uncertainties are generic, in the sense that the initial/boundary conditions and equation parameters usually come from experiments and therefore inevitably have measurement error.} If the initial/boundary conditions or the constitutive laws are uncertain and inaccurate, is the solution affected dramatically by such uncertainties? And how does one quantify the influence of the uncertainties on the solution?

One particular example is from ocean science in which scientists need to determine the arrival time of a tsunami at a particular location (the land, for example). Such quantity is affected by the time and location of the explosion (an underwater earthquake, or underwater landslides or volcanoes), the undersea topography, the strength of wind and many others. In practice, we only have limited information about them, and mathematically it is natural to model the unknowns as uncertain parameters in the equations. It is then a mathematical question to understand how the solution behaves as the parameters change the value, and assess the associated sensitivities.

Suppose we use the 1D shallow water wave equation to model tsunami:
\begin{equation}\label{sweq}
\left\{\begin{split}
& \partial_t h + \partial_x(hu) = 0\,, \\
& \partial_t (hu) + \partial_x (hu^2 + \frac{1}{2}h^2) = 0\,,
\end{split}\right.
\end{equation}
where $x$ and $t$ are space and time coordinates, $h$ is the depth of water, and $u$ is the velocity of the seawater. The explosion that triggers the tsunami is typically modeled by a shock profile in the initial data $(h_\text{in},u_\text{in})$. The data is certainly unknown but we can assume the initial data depends on a random variable \rev{$Z:\Omega\rightarrow\mathbb{R}^d$ that lives in a probability space $(\Omega,\mathcal{B},\mathbb{P})$. The joint probability density function of the random vector $z=Z(\omega)\in\mathbb{R}^d$ is denoted as $\pi(z)$.} There are many physical quantities that are of interest. One example is the arrival time of the tsunami to $x_0$, the location of the land, denoted by $t^\sharp$. The ultimate goal is to predict
$$\mathbb{E}(t^\sharp) = \int t^\sharp(z)\pi(z)\rd{z}\,,$$
the expected arrival time (assuming the random variable $z$ in the initial data correctly describe the explosion), and
$$\textnormal{var}(t^\sharp) = \int (t^\sharp(z)-\mathbb{E}(t^\sharp))^2\pi(z)\rd{z}\,,$$
the variance that quantifies the reliability of the prediction.

It is a standard procedure to simplify the model (\ref{sweq}) using the two Riemann invariants $u\pm 2\sqrt{h}$~\cite{landaulifshitz}. The equations now read:
\begin{equation*}
\partial_t (u\pm 2\sqrt{h}) + (u\pm \sqrt{h}) \partial_x (u\pm 2\sqrt{h}) = 0\,.
\end{equation*}
Suppose one cares about one Riemann invariant $v = u+2\sqrt{h}$, and set $(u-2\sqrt{h})|_{t=0}=c(z)$ to have $z$ dependence, equation~\eqref{sweq} is then reduced to
\begin{equation*}
\partial_t v + (\frac{3v}{4} + \frac{c}{4}) \partial_x v = 0\,,
\end{equation*}
which can be reformulated into the form of the Burgers' equation
\begin{equation}\label{eq}
\partial_t u + \partial_x (\frac{\alpha(z)}{2}u^2) = 0\,,\quad u(t=0,x,z) = u_\text{in}(x,z)\,,
\end{equation}
whose wave speed $\alpha(z)u$ and the initial condition vary according to the initial condition of~\eqref{sweq}. Our goal then is to study $u$, or some physical quantities derived from $u$, such as $t^\sharp$'s dependence on $z$.

There are several conventional ways of computing {the solution to} equations with unknown parameters. Among them, the generalized polynomial chaos (gPC) method has been quite popular during the recent  {several years}. {To} a large extent, it can be regarded as a spectral-type method applied onto the $z$-space.  {Although having the same way of representing functions by orthogonal polynomial expansions, the gPC method} has many variations (such as gPC-stochastic Galerkin, gPC-stochastic collocation (gPC-SC), and gPC-sparse grid etc.)~\cite{GHANEM1996289,XiuHesthaven_collocation,Xiu_gpc,nobile_sparse_2008}. Take gPC-SC method for example: a few sample points $\{z_j\}_{j=1}^N$ are pre-selected according to the probability distribution of $\pi(z)\rd{z}$ (typically one uses collocation points), and with these $z_j$ fixed, the equations are deterministic and can be easily computed by existing numerical methods. Upon getting the solutions at these preset sample points, the solutions of the equation on other configurations of $z$ are then interpolated in a polynomial way. The interpolation is regarded as an accurate surrogate to the true solution.

The method gains its popularity largely due to its ``spectral" nature: in many cases it gives spectral convergence, which is faster than most other methods. But it also inherits the strong requirement on the data: for the spectral convergence to be valid, regularity of the solution has to be justified: only when the to-be-interpolated functions are shown to be smooth can one prove the fast convergence (depending on the regularity). For ``lucky" cases like elliptic or parabolic equations, one can show such regularity, as was done in~\cite{babuska_galerkin_2004,babuska_stochastic_2007,ZhangGunzburger12}, but it is not the case for most hyperbolic conservation law equations~\cite{majda}. On the contrary, it is a well-known fact that the Burgers' equation, the toy model equation in scalar conservation laws, develops, in finite time, singular points (or shocks as they are termed) even with $C^\infty$ initial data, and such singularity in $x$ will  {naturally result in the development of discontinuity in $z$}. More importantly, such irregularity is generic. Because of this, {although the convergence of gPC expansions may still be guaranteed~\cite{EMSU},} there is no hope to declare any results on the {spectral} accuracy {of gPC methods, which is based on that of polynomial expansions}~\cite{gunzburger_stochastic_2014,Xiu10}.

We present some preliminary computation in Figure~\ref{fig:reference_ex}, which shows our numerical results of the Burgers' equation with random initial data:
\begin{equation}\label{eqn:initial_example}
u_\text{in}(x,z) = v(x,z) - 0.2(v(x,z)+0.5)(1-v(x,z)^2)z,\quad \text{with}\quad v(x,z) = \frac{1-e^{x-3z^3}}{1+e^{x-3z^3}}\,,
\end{equation}
using gPC-SC method. In what follows, we sometimes omit the variable $z$ in functions if it is clear from the content. We have assumed the random variable $z\in [-1,1]$ satisfies the Chebyshev distribution. According to gPC-SC method, the Chebyshev quadrature points are selected:
\begin{equation}\label{eqn:quadrature}
z_j=\cos\left(\frac{2j-1}{2N_z}\pi\right),\quad j=1,\dots,N_z=10\,,
\end{equation}
and the equation is numerically solved at each $z_j$. We run the experiment up to $T=2.2$, and collect $u(t=2.2,x,z_j)$ for all $z_j$ (5th order WENO scheme with the 3rd order SSP Runge-Kutta in time is used to minimize discretization error from time and space). The solution to all other $z\in[-1,1]$ is then interpolated using a 9-th order polynomial. We clearly see the spurious oscillations in Figure~\ref{fig:reference_ex}, where we compared the interpolated solution of $u(t=2.2,x,z_0=0.234)$ with the true solution as a function of $x$, and the interpolated solution of $u(t=2.2,x=-0.5,z)$ with the true solution as a function of $z$.

\begin{figure}
	\includegraphics[width=0.43\textwidth,height=0.2\textheight]{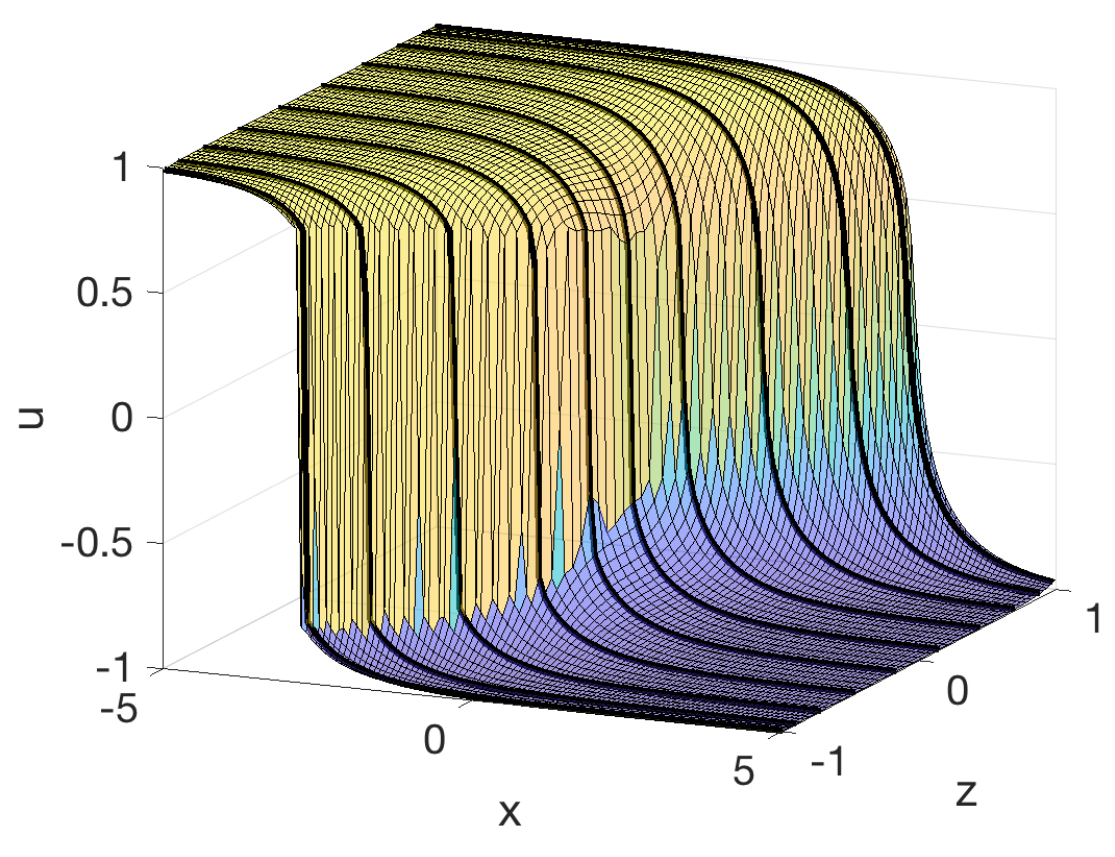}
	\includegraphics[width = 0.43\textwidth, height = 0.2\textheight]{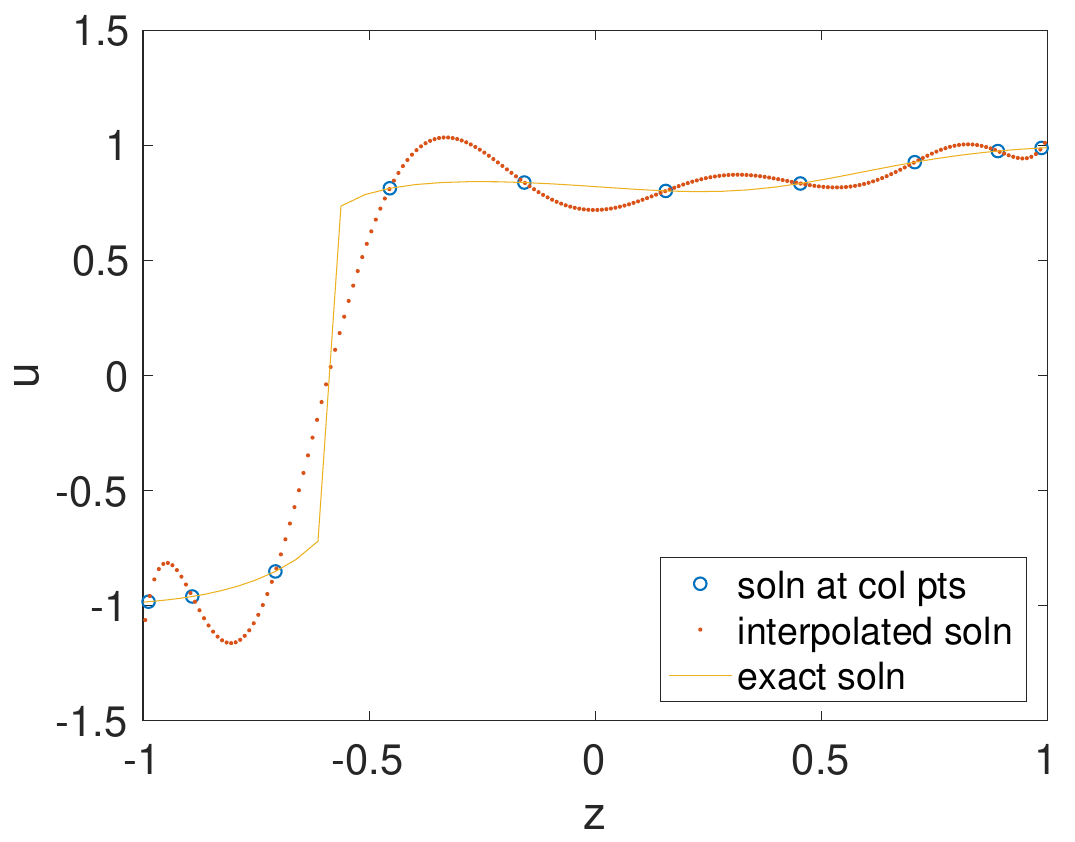}\\
	\includegraphics[width=0.43\textwidth,height=0.2\textheight]{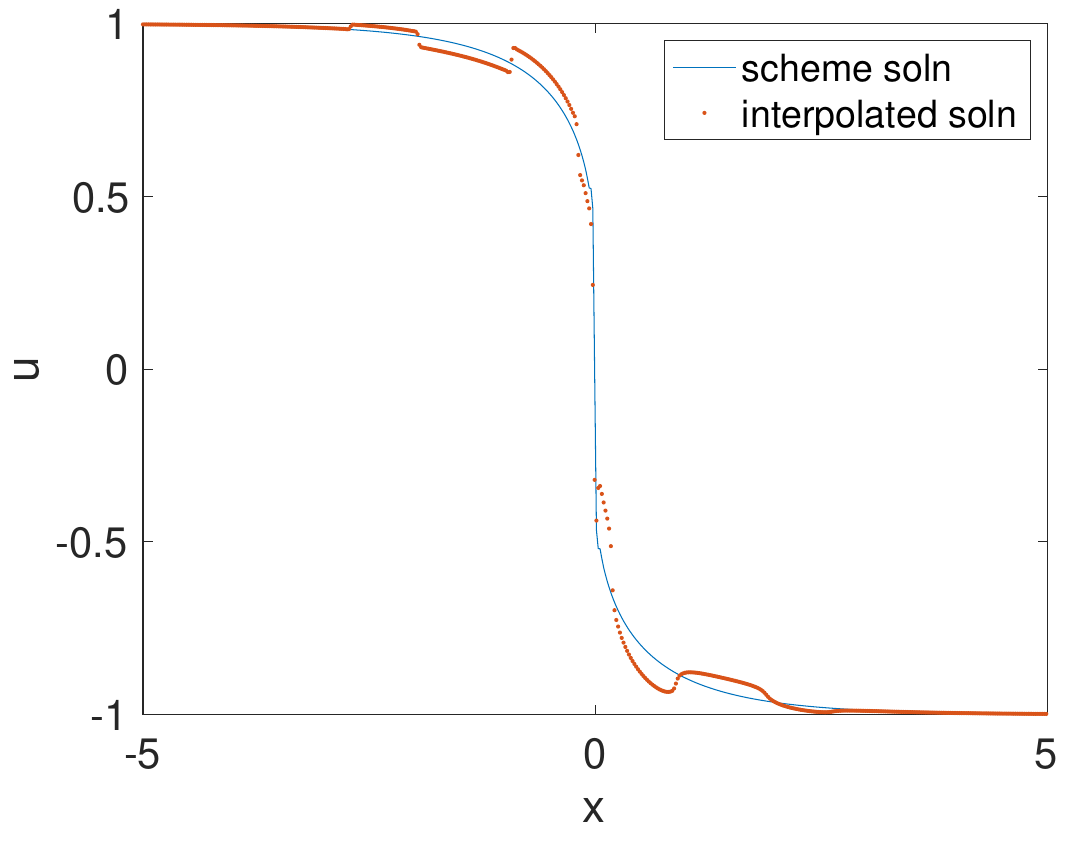}
	\includegraphics[width=0.43\textwidth,height=0.2\textheight]{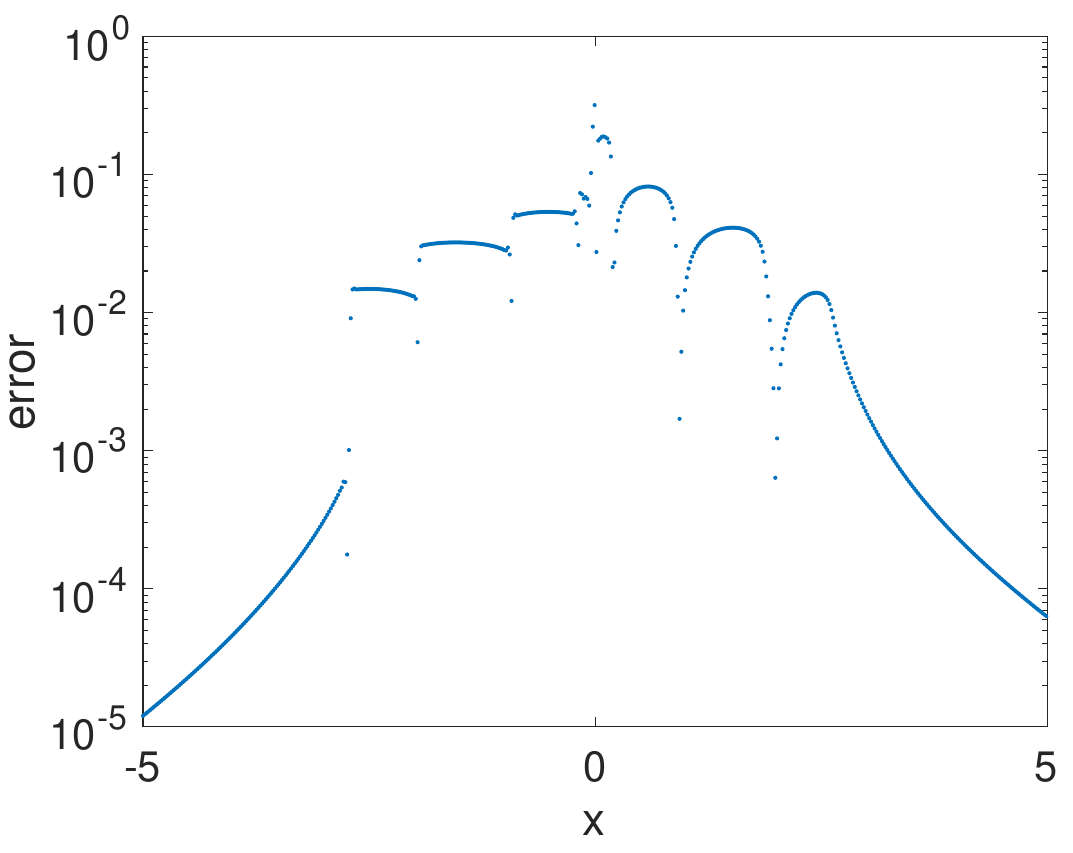}
	\caption{Top left: the numerical scheme solutions at sample points $\{z_j\}$; top right: the result by direct polynomial interpolation at $t=2.2,\,x=-0.5$; bottom left: compare the direct interpolation solution (dots) with the numerical scheme solution (line) at $z_0=0.234$; bottom right: error (difference between the two solutions in the middle picture).}\label{fig:reference_ex}
\end{figure}

\rev{Such loss of spectral accuracy is expected: due to the Gibbs phenomenon, the spectral method, albeit performing well in $L_2$, drastically fail in $L_\infty$, in the sense that it gives inaccurate interpolations to discontinuous functions like this one, and the $L_\infty$ error does not decay when one increases the order of the interpolation.} This poor numerical performance was noted in earlier works of~\cite{gunzburger_stochastic_2014,Xiu10,majda,Schwab_CL,Welper}, and largely for this reason, the results obtained using the gPC type methods are regarded unsatisfactory in the hyperbolic conservation laws setup.

\rev{However, we provide a counter-argument in the current paper. In particular, we will be dealing with the inaccuracy in the $L_\infty$ sense. It is based on a simple observation: even though the solution $u(t, x, z)$ may be discontinuous in $z$ for all $x$, making the interpolation severely inaccurate, the physical quantities that ``practically" matter are still smooth functions in $z$-space. They do not contain jump discontinuities in the random space, and we view them being insensitive to the random perturbation in the initial/boundary conditions. Such physical quantities include the shock location, the shock strength, the shock emerging time and the arrival time of the shock at certain locations. With these quantities identified, we can perform suitable ``shifting" of the solutions, which permits point-wise accuracy, meaning the gPC interpolation can produce accurate approximations even in the $L_\infty$ sense.
}
%

To be more precise, for the one-shock solutions, denote (as plotted in Figure~\ref{fig:ushape}):

\begin{enumerate}
\item $t^*(z)$, the shock appearing time;
\item $x^c(t,z)$, the shock location that moves with respect to time for $t\geq t^\ast$;
\item $u_1(t,z)-u_2(t,z)$, the shock strength (for $t\ge t^*(z)$), with 
\begin{equation}
u_1(t,z)=\lim_{x\rightarrow x^c(t,z)-}u(t,x,z),\quad\text{and}\quad u_2(t,z)= \lim_{x\rightarrow x^c(t,z)+}u(t,x,z)\,,
\end{equation}
being the upper and lower boundaries of the shock;
\item $t^\sharp(z)=\inf\{t: x^c(t,z)\ge x_0\}$, the shock arriving time\footnote{In case $x^c(t)\ge x_0$ is never satisfied, $t^\sharp$ is understood as infinity.}. Here $x_0$ denotes the predetermined location of land.
 \end{enumerate}

We repeat the previous example, and \rev{ find numerical evidence that shows these physical quantities are indeed smooth functions in $z$, seen in Figure~\ref{fig:u_12_x_c}.}

\begin{figure}
\centering
	\includegraphics[width=0.8\textwidth,height=0.25\textheight]{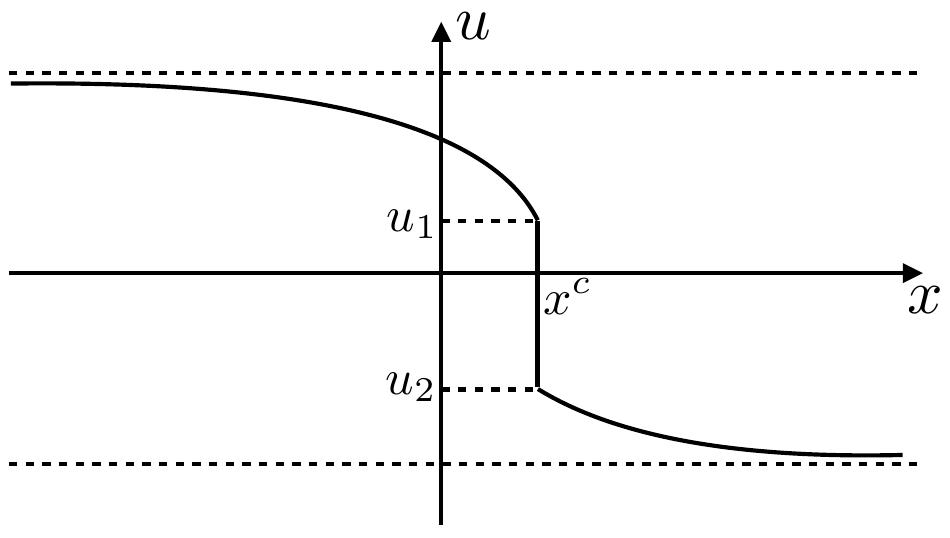}
	\caption{A demonstration of the quantities $u_1,\,u_2,\,x^c$, in the case of one-shock solution.}\label{fig:ushape}
\end{figure}

\begin{figure}
\centering
	\includegraphics[width=0.32\textwidth,height=0.2\textheight]{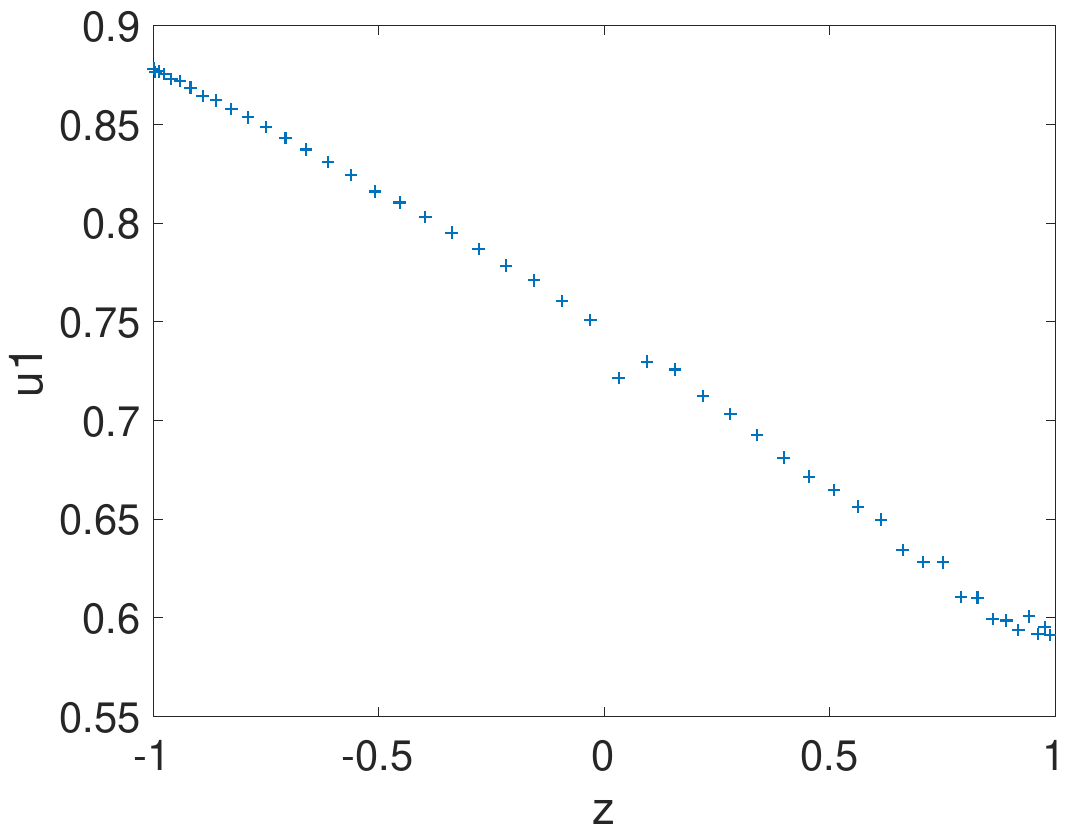}
	\includegraphics[width=0.32\textwidth,height=0.2\textheight]{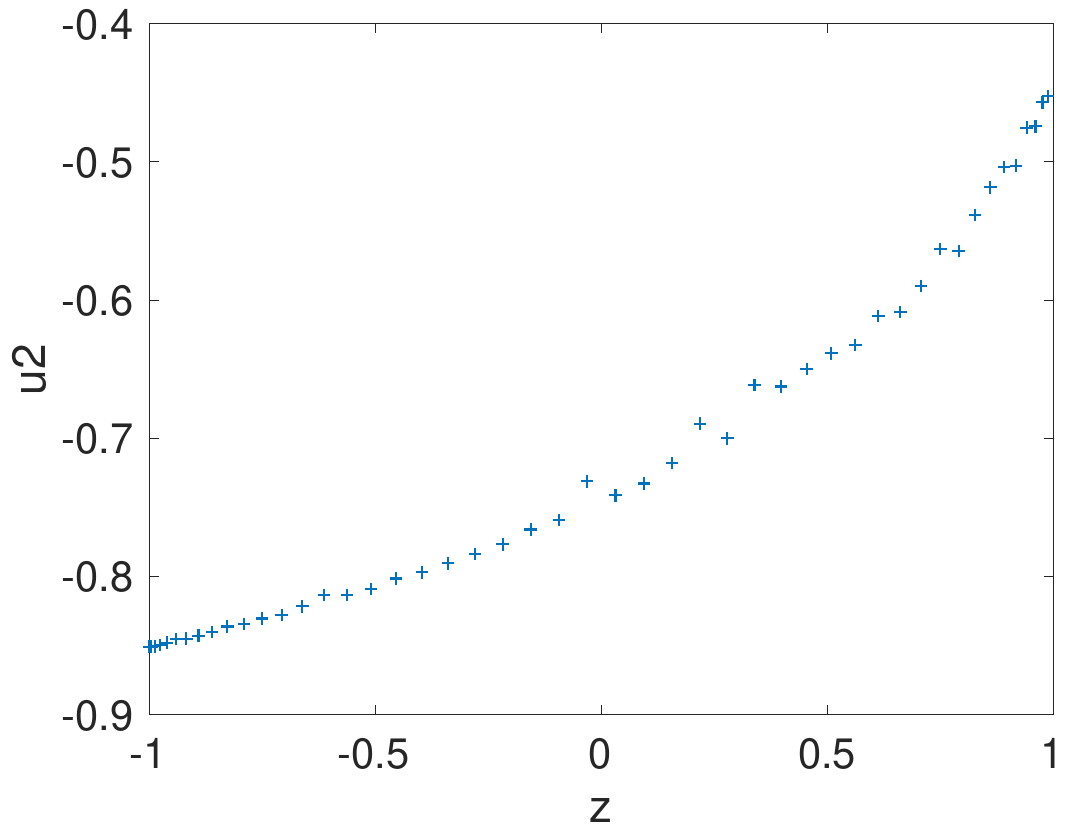}
	\includegraphics[width=0.32\textwidth,height=0.2\textheight]{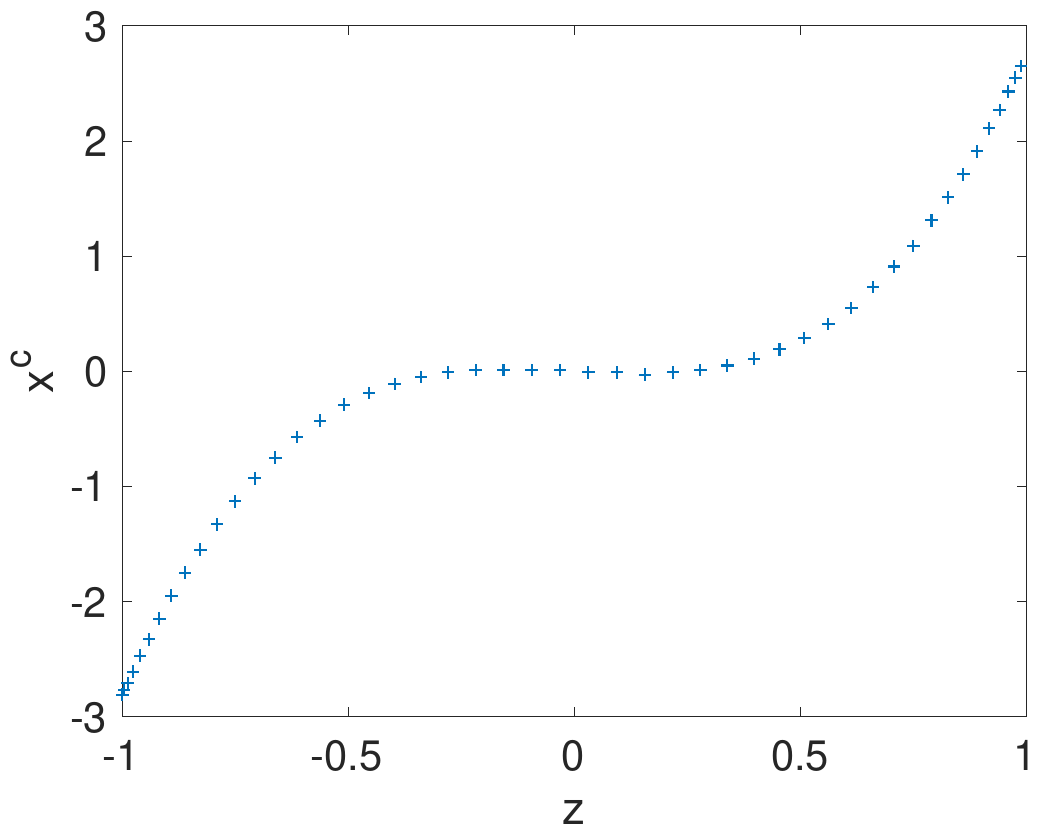}
	\caption{Left to right: $u_1,\,u_2,\,x^c$ as functions of $z$. The small zigzags in plots are from numerical error. {Numerically to identify these quantities, we look for the biggest jump between two adjacent grid points. This procedure brings some error and is not easy to be eliminated by adopting other numerical solver for the conservation law.}}\label{fig:u_12_x_c}
\end{figure}

The aim of the current paper is to mathematically prove this observation. Our main claim in this paper is: even the solution to the scalar hyperbolic conservation law varies drastically with respect to the random inputs in initial condition or the equation parameters, the physical quantities are insensitive to them, or more precisely:
\begin{theorem}[formal statement]\label{thm_main}
Let $u(t,x,z)$ be the solution to the Burgers' equation (\ref{eq}) a parameter $z$ representing uncertainty. Assume the initial data $u_\text{in}(x,z)$ is smooth and satisfies a set of conditions (to be made precise later) so that only one shock will appear for each $z$, and assume $\alpha$ is smooth on $z$. Then
\begin{enumerate}
\item The shock appearing time $t^*(z)$ depends smoothly on $z$;
\item The shock location $x^c(t,z)$ depends smoothly on $z$;
\item The shock strength $u_1(t,z)-u_2(t,z)$  depends smoothly on $z$;
\item The shock arriving time $t^\sharp(z)$, depends smoothly on $z$.
\end{enumerate}
\end{theorem}

As a direct corollary of this theorem, we also find that if the solutions are ``shifted correctly", the shifted solution $\tilde{u}(t,x,z)$ becomes smooth function in $z$ for every $t$ and $x$, granting the accuracy to the gPC type method. This could be summarized as:
\begin{theorem}[formal statement]\label{thm_tilde}
Define the shifted solution:
$$
\tilde{u}(t,x,z) = u(t+t^*(z),x+x^c(t+t^*(z),z),z)\,,
$$
so that shocks are aligned for all $z$ to the same emerging time and location, then with the same assumptions as Theorem \ref{thm_main}, $\tilde{u}$ is smooth in $z$ away from the set $\{(t,x,z):\,\,x=0\}$ for $t> 0$.
\end{theorem}

There are many groups of researchers working on similar topics. In~\cite{hou_li_zhang16_2,hou_li_zhang16_1}, the authors adopted the patch-wise low-rank studies. In~\cite{Siddhartha_random_flux,Mishra_Schwab}, the authors proved the wellposedness of entropy solution when randomness is present in initial data and flux, and $L_1$ contraction is used for estimating the error from interpolation method.  In~\cite{Schwab_CL}, the authors gave a very detailed analysis on the shock location of the Burgers' equation with Heaviside function as the initial data. A similar approach was taken in~\cite{Welper} where the author presents very powerful numerical evidence that demonstrates the shifting indeed ``save" the regularities of the solutions. Another approach proposed in~\cite{Mishra2017} and the reference therein is to explore Monte Carlo (and Multi-level Monte-Carlo) methods. In~\cite{Despres_UQ_CL}, the authors introduce entropic variables and expand the solution as polynomials of the new variable with the understanding that it is smoother when represented by the new variable. \rev{In~\cite{CVK} the authors derive the reduced order equations for the one-point and two-point probability density functions of the solution field, and design algorithms to compute the statistical properties of the random shock wave accordingly.} Other approaches include~\cite{Abgrall_truncate_encode,Abgrall2016} where authors either employed the so-termed truncate-encode framework, or in~\cite{DP_uncertaity} where kinetic formulation is utilized. The approach we are taking is in line with~\cite{Welper} and~\cite{Schwab_CL} but we emphasize on giving a quantitative mathematical justification in general cases. {Our approach is also closely related to the stochastic transformation proposed in~\cite{HLRZ} for the viscous Burgers' equation with random forcing.}

{We remark that Theorem \ref{thm_main} and \ref{thm_tilde} also hold for general scalar conservation laws with smooth convex flux functions. This means that the smoothness in $z$ of the physical quantities (shock appearing time, shock location, shock strength, etc.) is a generic fact, and the smoothness in $z$ of the solution profile can always be recovered by shifting correctly. }

We emphasize before finishing the introduction that the goal of the paper is not to justify the use of gPC method on Burgers' equation, or hyperbolic conservation laws in general, but to bring one more aspect to understand the shock structure in wave-like equations when uncertainties \rev{are} present. In fact, almost all numerical methods somewhat rely on the regularity of some to-be-computed quantities, and the result obtained in this paper serves as a justification for these algorithms applied in $z$ space.

The rest of this paper is organized as follows: in Section 2 we introduce some notations and state the precise quantitative version of Theorem \ref{thm_main} and \ref{thm_tilde}; in Section 3 we focus on the deterministic case and prepare some necessary tools for analyzing $u_1$ and $u_2$; these tools are crucial in Section 4 where we prove Theorem \ref{thm_main} and~\ref{thm_tilde}. {We also extend the results to treat conservation laws with general convex fluxes in Section 5.} \rev{Some proofs are tedious and not essential to the main contexts, and they are left in Appendix.}

\section{Notations and precise statement of main results}
There is a \rev{large variety} of solution behavior of conservation laws, and we restrict ourselves to the class of smooth initial data such that only one shock is developed for $t>0$. Mathematically:
\begin{assumption}\label{ass:x_u_initial}
denote $u_\text{in}(x,z)$ the initial data. \rev{We require $u_\text{in}(x,z)$ to be smooth in $(x,z)$, and $u_{in}(\cdot,z)$, as a function of $x$, to satisfy the following:}
\begin{itemize}
\item $u_\text{in}$ monotonically decreases in $x$: i.e. $u'_\text{in}(x)<0$ for all $x$, and $\lim_{x\rightarrow \pm\infty}u_\text{in}(x) = u_\pm$. Here $u_+ < u_-$ are constants independent of $z$;
\item $u_\text{in}$ has a unique inflection point $(x^\ast,u^\ast)$, meaning $u_\text{in}(x^\ast) = u^\ast$ and $u_\text{in}''(x^\ast) = 0$;
\item $u'''_\text{in}(x^\ast)>0$;
\item $\alpha(z)$ has uniform bounds: $0 < \alpha_0 <\alpha(z) < \alpha_1$.
\end{itemize}
\end{assumption}

Under these assumptions, we restate Theorem \ref{thm_main} rigorously and quantitatively:
\begin{theorem}\label{thm_main1}
Let $u(t,x,z)$ be the solution to the Burgers' equation (\ref{eq}) with uncertainty. Assume that the initial data $u_\text{in}(x,z)$  satisfies Assumption \ref{ass:x_u_initial}, and that $u_+ +\delta \le u^*(z) \le u_- -\delta$ for all $z$, with $\delta>0$. Then with $\alpha(z)>0$ being smooth in $z$, one has:
\begin{enumerate}
\item The shock appearing time is given by 
\begin{equation}\label{tst}
t^*(z)=-\frac{1}{\alpha (z)u_\text{in}'(x^*(z),z)}\,,
\end{equation}
 where $x^*(z)$ is as in Assumption \ref{ass:x_u_initial}. It follows that $t^*(z)$ depends smoothly on $z$.
\item The shock location $x^c(t,z)$ (defined for $t\ge t^*(z)$) depends smoothly on $z$, and satisfies the estimate
\begin{equation}
\partial_z^k x^c = \mathcal{O}((t-t^\ast)^{\min\{3/2-k,0\}})\,,
\end{equation}
for $t-t^\ast$ small enough.
\item The shock strength $u_1(t,z)-u_2(t,z)$ depends smoothly on $z$, and satisfies the estimate
\begin{equation}
\partial_z^k (u_1-u_2) =  \mathcal{O}((t-t^\ast)^{1/2-k}),
\end{equation}
for $t-t^\ast$ small enough.
\item Let $x_0$ be large enough so that $x_0>\sup_z x^c(t^*(z),z)$. Assume
\begin{equation}\label{214assu}
\partial_t x^c(t^\sharp,z)\ne 0\,,
\end{equation}
and that $t^\sharp<\infty$, then $t^\sharp$ depends smoothly on $z$.
\end{enumerate}

\end{theorem}
\begin{remark}
We now comment that Assumption~\eqref{214assu} is not restrictive. In fact, as will be seen in Section \ref{sec_det}, $\partial_t x^c$ has the explicit expression (\ref{eqn:center_location}), and thus~\eqref{214assu} can be checked explicitly. In the same section, we can also derive that in long time,
\begin{equation}
\lim_{t\rightarrow \infty} \partial_t x^c(t,z) = \alpha\frac{u_-+u_+}{2}\,,
\end{equation}
and thus $u_-+u_+>0$ automatically leads to~\eqref{214assu} for large $t$. This exactly corresponds to the realistic case when a tsunami forms far away from the land, and takes a long time to propagate to the land.
\end{remark}

Theorem \ref{thm_tilde} is a corollary of the theorem above, and in the precise form it states:
\begin{theorem}\label{thm_tilde1}
Under the same assumptions as Theorem \ref{thm_main1}, the translated solution 
\begin{equation}\label{tildeu}
\tilde{u}(t,x,z)=u(t+t^*(z),x+x^c(t+t^*(z),z),z)\,,
\end{equation}
 is smooth in $z$ away from the set $\{(t,x,z):\,x=0,\,t=0\}$, and has the estimate
\begin{equation}
|\partial_z^k \tilde{u}(t,x,z)| =\mathcal{O}(|x|^{1-2k}t^{\min\{3/2-k,0\}}),
\end{equation}
if $t> 0$ is small enough.
\end{theorem}

{This theorem implies that a proper shifting of the solution could eliminate the irregular jumps in the solution, and this would allow the spectral type method such as gPC to apply well. In particular, using the same example as in Section 1, assuming the random variable $z\in [-1,1]$ satisfying the Chebyshev distribution,  we denote:}
\begin{equation}\label{poly_interp}
u^N(t,x,z) = \sum_{j=0}^N\tilde{u}(t,x,z_j)\ell_j(z)\,,
\end{equation}
where $z_j$ are the Chebyshev quadrature points defined in~\eqref{eqn:quadrature}, and $\ell_j$ are the corresponding Lagrange polynomials in $z$ domain. {Then we have the following theorem. We note that if $z$ satisfies another distribution, similar techniques can still be applied with $z_j$ shifted accordingly. For the conciseness of the statement here we stick to this particular kind of random variable.}

\begin{theorem}\label{thm_interp0}
{Assume the random variable $z\in [-1,1]$ satisfying the Chebyshev distribution.} Under the same assumptions as Theorem \ref{thm_main1}, the error of the interpolated solution $u^N$ can be estimated by
\begin{equation}\label{thm_interp_10}
|\tilde{u}(t,x,z)-u^N(t,x,z)| \le \frac{C(m)|x|^{-1-2m}t^{1/2-m}}{N^m},\quad \forall z\in [-1,1],\quad {t>0}\,,
\end{equation}
for any $m\ge 1$, i.e., it has $m$-th order accuracy away from the shock location and the shock appearing time.

Furthermore, if we use $\mathbb{E}(u^N),\,\textnormal{var}(u^N)$ to approximate the mean and variance of $\tilde{u}$, and assuming that $z\in[-1,1]$, then we have the error estimate
\begin{equation}\label{thm_interp_20}
|\mathbb{E}(\tilde{u})-\mathbb{E}(u^N)| \le \frac{C(m)|x|^{-1-2m}t^{1/2-m}}{N^m}\,,
\end{equation}
\begin{equation}\label{thm_interp_30}
 |\textnormal{var}(\tilde{u})-\textnormal{var}(u^N)| \le \frac{C(m)(1+\min\{\|\tilde{u}-u^N\|_{L^\infty_z},N^2\})|x|^{-1-2m}t^{1/2-m}}{N^m}\,.
\end{equation}
\end{theorem}


{We remark that all the three estimates in Theorem \ref{thm_interp0} are pointwise in $t$ and $x$. They deteriorate as $|x|$ or $t$ gets small, i.e., the location is close to the shock or the time is close to the shock appearing time.}

Note that the $\min$ in the estimate of $\textnormal{var}(\tilde{u})-\textnormal{var}(u^N)$ in~\eqref{thm_interp_30} is necessary. For fixed $x$, as $N$ goes to infinity, according to~\eqref{thm_interp_10}, $\tilde{u}(t,x,z)-u^N(t,x,z)$ shrinks to zero, but on the other hand, for fixed $N$ and $x\sim 0$ (close to the shock), the difference between $\tilde{u}$ and $u^N$ could be significant and we use $N^2$ as the bound there.

We note that Theorem~\ref{thm_main1} and Theorem~\ref{thm_tilde1} only state the smoothness in $z$-space regarding $z$ as an external unknown parameter. It was not until Theorem~\ref{thm_interp0} where we incorporate the statistical behavior, and thus $\pi(z)$, the distribution is needed.

Proofs for Theorem~\ref{thm_main1} and Theorem~\ref{thm_tilde1} heavily depend on the delicate analysis of $u_1$ and $u_2$, while Theorem~\ref{thm_interp0} immediately follows the regularity results in Theorem~\ref{thm_tilde1}. Since the dynamics of $u_{1,2}$ are so crucial, for a clear presentation, we devote Section~\ref{sec_det} to developing the necessary tools for the equation in the deterministic setting. These results will be used in later sections, where energy estimate is used for showing the two main theorems.

\section{Burgers' equation -- deterministic case}\label{sec_det}
In this section we will mainly focus on the shock behavior and the main tool is the hodograph transform. The reformulation is performed in Section~\ref{sec:reformulation} and the local-in-time shock behavior is presented in Theorem~\ref{thm1_deterministic} in Section~\ref{sec:det_shock}.

\subsection{Reformulation of the Burgers' equation}\label{sec:reformulation}
The monotonicity assumption of $u$ on $x$ makes the application of the hodograph transform possible: by flipping $x$, $u$ coordinates we can study the evolution of $x(u)$ in time. Denote $x=x(t,u)$ the inverse function of $u(t,x)$ for all $t$, then the domain is $(t,u)\in [0,\infty)\times(u_+,u_-)$.

\subsubsection{Before the formation of the shock at time $t^\ast$}
$x(u)$ is the coordinate that sits on the $u$-level set. Since the Burgers' equation has its wave propagating with speed $\alpha u$, then before $t^\ast$, we have:
\begin{equation}\label{eq1}
\begin{cases}
\partial_t x(t,u)  = \alpha u\,,\quad u\in (u_+,u_-)\,,\\
 x(t=0,u) = x_\text{in}(u)\,.
\end{cases}
\end{equation}

Assumption~\ref{ass:x_u_initial} on $u_\text{in}$ could be translated to assumption on $x_\text{in}$:
\begin{itemize}
\item $x'_\text{in}(u)< 0$;
\item $x_\text{in}$ has one unique inflection point at $(u^\ast,x^\ast)$;
\item $(x_\text{in})'''(u^\ast)<0$.
\end{itemize}

Take $\partial_u$ of~\eqref{eq1}, we also have:
\begin{equation}\label{eq2}
\partial_t \partial_ux(t,u)  = \alpha \,,\quad \Rightarrow\quad\partial_ux(t,u) = x_\text{in}'(u) + \alpha t = -f(u)+\alpha t\,,
\end{equation}
where we used the notation
\[
f(u) = -x_\text{in}'(u)\,.
\]
Combined with the property $x'_\text{in}(u)< 0$, we have $\partial_ux(t,u)\leq 0$ for $t<t^\ast$ where
\begin{equation}
t^\ast = \min_u (-\frac{1}{\alpha }x'_\text{in}(u)) = -\frac{1}{\alpha}x'_\text{in}(u^\ast)\,,
\end{equation}
is the earliest time for a shock to emerge. Such a shock appears at $(u^\ast,x^\ast)$.

\subsubsection{After the formation of the shock at $t^\ast=-\frac{1}{\alpha }x'_\text{in}(u^\ast)$}
The strong solution to~\eqref{eq} breaks down, and equation~\eqref{eq1} no longer correctly characterizes the solution behavior. As the weak formulation and the entropy condition are used to replace the strong form to characterize $u(t,x)$ on the $x-u$ plane, a different set of equation is needed for $x(t,u)$ on the $x-u$ plane: to do that we first utilize the Rankine-Hugoniot condition. Denote $u_1(t)$ and $u_2(t)$ as the top and the bottom of the shock, then the shock speed is:
\begin{equation}\label{eqn:shock_speed}
s = \alpha \frac{u_1(t)^2/2 - u_2(t)^2/2}{u_1(t)-u_2(t)} = \alpha \frac{u_1(t) + u_2(t)}{2}\,,
\end{equation}
meaning the shock location $x^c(t)$ satisfies the ODE:
\begin{equation}\label{eqn:center_location}
\frac{\rd}{\rd t}x^c = \alpha \frac{u_1(t) + u_2(t)}{2}\,,\quad\text{with}\quad x^c(t^\ast) = x^\ast\,.
\end{equation}
Notice that under the hodograph transform, the shock in $u(t,x)$ becomes a flat region in $x(t,u)$, ranging from $u_2$ to $u_1$ and has height $x^c$. 

\begin{figure}
\centering
	\includegraphics[width=0.7\textwidth,height=0.4\textheight]{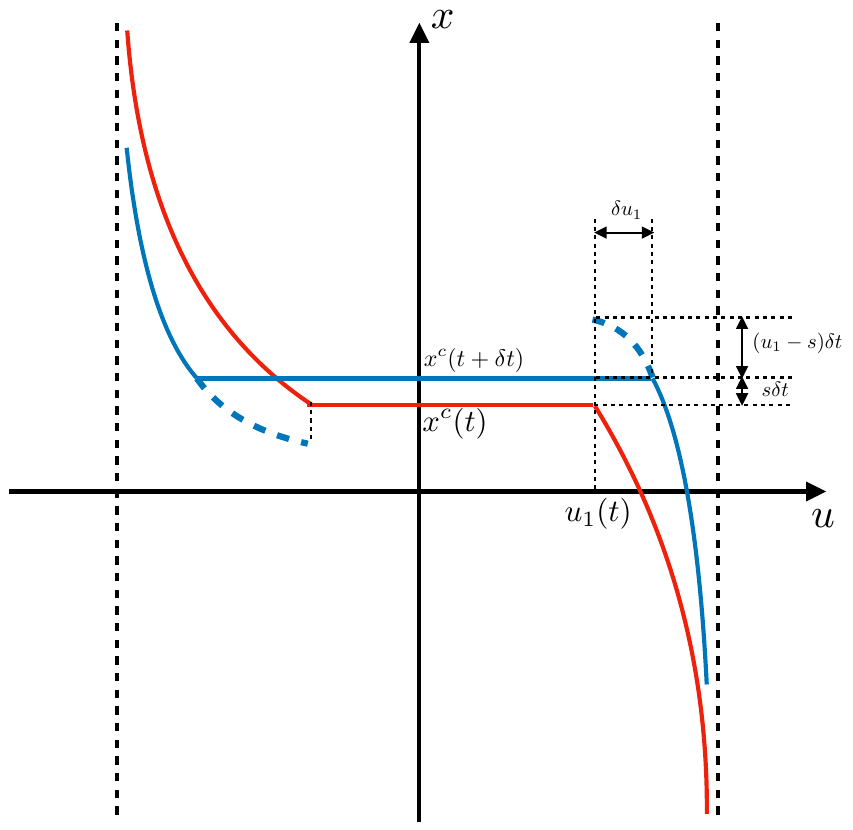}
	\caption{Evolution of $u_1,\,u_2$, in the hodograph-transformed picture. Here the red curve is the solution $x(t,u)$ at some time spot, and the solid blue curve is the solution after a small time period $\delta t$. The dashed blue curve is the dynamics of (\ref{eq1}).}\label{fig:u1u2pic}
\end{figure}

The ODE system for $u_{1,2}(t)$ can also be derived, as seen in Figure \ref{fig:u1u2pic}. If one focuses on the neighborhood of $u_1$, the flat region propagates in the vertical direction with speed $s$, while $x$ for $u>u_1$ propagates in the horizontal direction with a faster speed $u_1>s$. These coordinates that are supposed to travel faster then get absorbed into the flat region, widening it (around $u_1$) by:
 \begin{equation}
\delta u_1 = -\partial_xu(t,x)|_{u=u_1}\delta x=-\partial_xu(t,x)|_{u=u_1}\alpha \frac{u_1(t) - u_2(t)}{2}\delta t= -\frac{\alpha}{\partial_ux(t,u_1)} \frac{u_1(t) - u_2(t)}{2}\delta t\,,
\end{equation}
where
\begin{equation*}
\delta x = (\alpha u_1-s)\delta t = \alpha \frac{u_1(t) - u_2(t)}{2}\delta t\,,
\end{equation*}
presents the ``overshoot" before entropy condition is applied to ``cut" the multi-value solution. Considering~\eqref{eq2} and conduct the same analysis for $u_2$, one has
\begin{equation}\label{ode01}
\frac{\rd}{\rd t}\left(\begin{array}{c}u_1\\u_2\end{array}\right) = \left(\begin{array}{c}F_1(u_1,u_2)\\F_2(u_1,u_2)\end{array}\right) = \frac{\alpha}{2}(u_1-u_2)\left(\begin{array}{c}\frac{1}{f(u_1)-\alpha t}\\\frac{-1}{f(u_2)-\alpha t}\end{array}\right)\,,
\end{equation}
with the initial condition
\begin{equation}\label{odei0}
u_1(t^\ast)=u_2(t^\ast) = u^\ast\,.
\end{equation}
In the equation $F_{1,2}$ denotes the forcing terms for $u_{1,2}$ respectively.
\begin{remark}
Some comments are in line:
\begin{itemize}
\item According to~\eqref{eq2} and the monotonicity of $\partial_ux$, $-f(u_{1,2})+\alpha t=x'_\text{in}(u_{1,2})+\alpha t\leq 0$. Combined with~\eqref{ode01}, it is shown that $u_1(t)$ monotonically increases in time and $u_2(t)$ monotonically decreases in time, meaning:
\begin{equation}\label{cond}
u_1\geq u^\ast,\quad u_2\leq u^\ast,\quad f(u_{1,2})-\alpha t \geq 0\,.
\end{equation}
\item The system~\eqref{ode01} is self-consistent. This means the information in the shock is fully represented by $u_1$ and $u_2$. The general profile of $x(u)$ is irrelevant.
\end{itemize}
\end{remark}

\subsubsection{Summary}
To summarize the reformulation, the Burgers' equation, when written on the $(x,u)$-plane, becomes:
\begin{equation}
\begin{cases}
t<t^\ast = -\frac{1}{\alpha }x'_\text{in}(u^\ast):\quad&\text{Equation~\eqref{eq1}}\,,\\
t>t^\ast:\quad&
\begin{cases}
\text{Equation~\eqref{eq1}}\quad \text{with}\quad u\in(u_+,u_2)\cup(u_1,u_-)\,,\\
\text{Equation~\eqref{eqn:center_location}}\quad \text{with}\quad u\in (u_2,u_1)\,,
\end{cases}
\end{cases}
\end{equation}
where $u_1$ and $u_2$ are the shock locations satisfying the ODE system~\eqref{ode01}.

\subsection{Shock behavior for small time}\label{sec:det_shock}
The shock behavior is fully described by~\eqref{ode01}, which we study in depth in this section. To start, we first shift the coordinates and the time frame so that\footnote{This assumption implies that $x_\text{in}(u)\sim u^3$ for $u$ close to zero. In other words, $u_\text{in}(x)\sim (x-x^*)^{1/3}$ for $x$ close to $x^*$. This is known as the shock formulation profile for the Burgers' equation.} $u^\ast = 0$ and $t^\ast=0$. Since $t^\ast = -\frac{1}{\alpha }x'_\text{in}(u^\ast)$, $u_1(t^\ast) = u_2(t^\ast) = u^\ast$, one has:
\begin{equation}\label{odei}
u_1(0) = u_2(0) = 0\,,\quad\text{and}\quad f(0) = 0\,.
\end{equation}
Physically it means the flat region starts forming at $t=0$, $u=0$.

Assumption \ref{ass:x_u_initial} on $u_\text{in}$ is now formulated as the following:
\begin{assumption}\label{ass:x_u_initial2}
Denote $f(u)=-x_\text{in}'(u)$, then:
\begin{itemize}
\item $f(u)\geq 0$. This follows from item 2 of Assumption \ref{ass:x_u_initial} and the fact that $x_{\ri}'(u) = \frac{1}{u_\text{in}'(x)}$.
\item $f'(u) = 0$ at only one point $u^\ast$. To see this, one first differentiates $x_{\ri}'(u) = \frac{1}{u_\text{in}'(x)}$ to obtain $x_{\ri}''(u) = -\frac{u_\text{in}''(x)}{(u_\text{in}'(x))^3}$, and then notice item 3  of Assumption \ref{ass:x_u_initial}.
\item $f'(u) <0$ when $u<0$, $f'(u)>0$ when $u>0$, and $f''(0)>0$. The sign of $f'$ can be seen by $x_{\ri}''(u) = -\frac{u_\text{in}''(x)}{(u_\text{in}'(x))^3}$ and the signs of $u_\text{in}'(x),u_\text{in}''(x)$. The sign of $f''(0)$ can be seen by differentiating $x_{\ri}''$ and evaluating at $u=x=0$ to see $x_\text{in}'''(0) = -\frac{u_\text{in}'''(0)}{(u_\text{in}'(0))^4}$, and combining with item 4 of Assumption \ref{ass:x_u_initial}.
\end{itemize}
\end{assumption}
\begin{remark}
These assumptions, when combined with~\eqref{odei}, indicate that around $u=0$, $f(u)$ behaves like a quadratic function
\begin{equation}\label{au2}
f(u) \sim au^2\,,\quad \text{with} \quad a = \frac{1}{2}f''(0)>0\,.
\end{equation}
With this simpler quadratic form, in small time, $u$ is also small. The ODE system~\eqref{ode01} then gets simplified:
\begin{equation}\label{eqn:ode_approx}
\begin{cases}
\frac{\rd{u_1}}{\rd{t}} & = \frac{\alpha }{2}(u_1-u_2)\frac{1}{au^2_1-\alpha t}\,, \\
\frac{\rd{u_2}}{\rd{t}} & = -\frac{\alpha }{2}(u_1-u_2)\frac{1}{au^2_2-\alpha t} \,,
\end{cases}
\end{equation}
and one has the explicit solution:
\begin{equation}\label{ode1s}
u_1=-u_2 = (\frac{3\alpha}{a} t)^{1/2}\,.
\end{equation}
This result implies that $u_1$ and $u_2$ approximately grow in time with the power $\frac{1}{2}$. Generally, $f(u)$ is not a quadratic function, but one can still use two quadratic functions with different $a$ to sandwich the solution for a $t^{1/2}$ growth rate.
\end{remark}

We now state our theorem:

\begin{theorem}\label{thm1_deterministic}
	Under Assumption \ref{ass:x_u_initial2} on $f$, assume $u_{1,2}(t)$ solve the ODE system~\eqref{ode01} with initial condition~\eqref{odei} and  satisfy $u_1>0,\,u_2<0,\,f(u_{1,2})-\alpha t>0$ for $t>0$. Then, for any $\epsilon>0$, there holds
	\begin{equation}
	(\frac{3\alpha}{a+\epsilon} t)^{1/2}\le u_1 \le (\frac{3\alpha}{a-\epsilon} t)^{1/2},\quad -(\frac{3\alpha}{a-\epsilon} t)^{1/2}\le u_2 \le -(\frac{3\alpha}{a+\epsilon} t)^{1/2}\,,
	\end{equation}
	for $t$ small enough, with $a$ defined in~\eqref{au2}.
\end{theorem}

Note that the wellposedness of the system is not discussed in the theorem. In fact, away from the initial time, the forcing terms are Lipschitz, making the proof of the wellposedness standard, which we leave to Appendix~\ref{sec:appendix_ode}. To prove the small time behavior of $u_{1,2}$, we first start with an ODE (analyzed in Lemma~\ref{lem2.2}, and then utilize the symmetry condition (Lemma~\ref{lem:sym}) for a solution to the ODE system~\eqref{ode01} when $f$ is a quadratic function. The monotonicity (Lemma~\ref{lemcomp}) is then applied to sandwich the solution to the problem in which $f$ is not quadratic. Without loss of generality, $\alpha$ is set to be $1$ below.
\begin{lemma}\label{lem2.2}
Under Assumption \ref{ass:x_u_initial2}, and assume $u(t)>0$ and $f(u)>t$ for all $t>0$, the ODE
\begin{equation}\label{odeu}
\frac{\rd{u}}{\rd{t}} = \frac{u}{f(u)-t}\,,\quad u(0)=0\,,
\end{equation}
has a unique solution {given by the implicit function
\begin{equation}\label{odeu_soln}
t = \frac{1}{u}\int_0^u f(s)\rd{s}\,.
\end{equation}
}
\end{lemma}
\begin{remark}
We note that we do not have the wellposedness if we remove the condition $f(u)>t$ and $u(t)>0$. In fact, $u=0$ for all $t>0$ is also a solution. The extra condition allows us to obtain the uniqueness for all $t$.
\end{remark}
{\begin{proof}
The condition $f(u)>t$ excludes the possibility of $u(t)=0$ for $t>0$, and thus $\frac{\rd{u}}{\rd{t}}\ne 0$, and one can write $t=t(u)$. Then $t(u)$ satisfies
\begin{equation}
\frac{\rd{t}}{\rd{u}} = \frac{f(u)-t}{u}\,,\quad t(0)=0\,,
\end{equation}
which is a linear ODE, and has the general solution
\begin{equation}
t = \frac{1}{u}\left(\int_0^u f(s)\rd{s}+C\right)\,,
\end{equation}
away from $u=0$. Since $f(s)\sim as^2$ for small $s$, it is clear that $\lim_{u\rightarrow 0}\frac{1}{u}\left(\int_0^u f(s)\rd{s}+C\right)=0$ only holds when $C=0$. This means \eqref{odeu_soln} gives the only solution to \eqref{odeu} satisfying the assumptions.
\end{proof}
}

\begin{lemma}\label{lem:sym}
If $f(u) = f(-u)$ is a symmetric function then $(u,-u)$ solves~\eqref{ode01} if $u$ solves~\eqref{odeu}.
\end{lemma}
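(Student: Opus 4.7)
The plan is a direct substitution-and-verify argument, since the statement is essentially an algebraic identity that propagates through the ODE system under the symmetry hypothesis. There is no real obstacle here; the symmetry of $f$ collapses the coupled system \eqref{ode01} onto the single equation \eqref{odeu}.

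First I would set $u_1(t) := u(t)$ and $u_2(t) := -u(t)$, where $u$ is the solution to \eqref{odeu} produced by Lemma \ref{lem2.2}, and record the two algebraic consequences of $f$ being even: namely, $u_1 - u_2 = 2u$ and $f(u_1) = f(u) = f(-u) = f(u_2)$. From Lemma \ref{lem2.2} we also inherit $u(0) = 0$, which automatically gives the correct initial condition $u_1(0) = u_2(0) = 0 = u^\ast$ in \eqref{odei}, as well as the sign/constraint conditions $u_1 > 0$, $u_2 < 0$, and $f(u_{1,2}) - t > 0$ for $t > 0$ that accompany the ODE system.

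Next I would verify the first equation of \eqref{ode01}: the right-hand side becomes
\begin{equation*}
\tfrac{1}{2}(u_1 - u_2)\bigl(f(u_1) - t\bigr)^{-1} = \tfrac{1}{2}(2u)\bigl(f(u) - t\bigr)^{-1} = u\bigl(f(u) - t\bigr)^{-1},
\end{equation*}
which equals $du/dt = du_1/dt$ by \eqref{odeu}. For the second equation, the right-hand side is
\begin{equation*}
-\tfrac{1}{2}(u_1 - u_2)\bigl(f(u_2) - t\bigr)^{-1} = -u\bigl(f(u) - t\bigr)^{-1} = -\frac{du}{dt} = \frac{du_2}{dt},
\end{equation*}
again using the evenness of $f$ to replace $f(u_2) = f(-u)$ by $f(u)$. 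Both equations of \eqref{ode01} are therefore satisfied, completing the verification.

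The only thing one might want to flag as a sanity check is that the constraint $f(u) - t > 0$ guaranteed by Lemma \ref{lem2.2} is exactly the nondegeneracy condition that makes the forcing terms in \eqref{ode01} well-defined along the candidate solution, so the substitution is legitimate for all $t > 0$ in the relevant range. No further regularity or uniqueness issue needs to be addressed at the statement level of this lemma.
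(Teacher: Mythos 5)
Your verification is correct and is exactly the symmetry argument the paper has in mind (the paper omits the proof entirely, noting only that it "is easily obtained by symmetry"). The substitution $u_1=u$, $u_2=-u$ together with $f(u_1)=f(u_2)=f(u)$ and $u_1-u_2=2u$ reduces both equations of \eqref{ode01} to \eqref{odeu}, as you show.
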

The proof is rather straightforward and we omit it.

\begin{lemma}\label{lemcomp}
	Let $(u_1,u_2)$ solves~\eqref{ode01} with initial condition~\eqref{odei}, and $(v,-v)$ solves~\eqref{ode01} with initial condition~\eqref{odei} where $f$ is replaced by an even function $g$ ($g(u)=g(-u)$). Then for small $t$:
	\begin{itemize}
	\item if $f(u)< g(u)$ for all $u$, then $u_1\ge v,u_2\le -v$;
	\item if $f(u)> g(u)$ for all $u$, then $u_1\le v,u_2\ge -v$.
	\end{itemize}
\end{lemma}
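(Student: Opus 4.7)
The approach is to compare the two solutions simultaneously by tracking the pair of differences $w_1(t) = u_1(t) - v(t)$ and $w_2(t) = -v(t) - u_2(t)$, both of which vanish at $t=0$ and which I want to remain nonnegative on some interval $(0,T]$. The heart of the argument is that at any $t>0$ where $w_1$ or $w_2$ tries to cross zero, the strict pointwise gap $f<g$ forces a strictly positive sign on the corresponding derivative. Applying system~\eqref{ode01} with $f$ replaced by the even function $g$ to the symmetric ansatz $(v,-v)$ collapses to the scalar equation $\frac{\rd v}{\rd t} = v/(g(v)-t)$.

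Suppose toward contradiction that $t_0 := \inf\{t>0 : w_1(t)<0 \text{ or } w_2(t)<0\}$ lies in $(0,T]$. By continuity $w_1(t_0), w_2(t_0) \ge 0$ with at least one equality. Assume $w_1(t_0)=0$; subtracting the two equations gives
\[
\frac{\rd w_1}{\rd t}\bigg|_{t_0} = \frac{u_1(t_0) - u_2(t_0)}{2\bigl(f(v(t_0))-t_0\bigr)} - \frac{v(t_0)}{g(v(t_0))-t_0}.
\]
Two independent pieces of slack make this strictly positive: first, $w_2(t_0) \ge 0$ gives $u_1(t_0) - u_2(t_0) \ge 2v(t_0)$; second, the strict inequality $f(v(t_0))<g(v(t_0))$ combined with $f(v)-t_0,\, g(v)-t_0 > 0$ forces $1/(f(v)-t_0) > 1/(g(v)-t_0)$. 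Hence $w_1$ is strictly increasing at $t_0$, contradicting $w_1$ being zero at the infimum of its excursion below zero. The symmetric computation handles the case $w_2(t_0)=0$, using $g(-v)=g(v)$ so that $f(-v(t_0)) < g(v(t_0))$.

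The remaining step is to push $t_0$ away from $0$. Applying Theorem~\ref{thm1} to both systems, with $a=\tfrac12 f''(0)$ and $b=\tfrac12 g''(0)$, yields the sandwich $u_1(t) \ge (3t/(a+\epsilon))^{1/2}$ and $v(t) \le (3t/(b-\epsilon))^{1/2}$. Under $f<g$ combined with $f(0)=g(0)=0$ and $f'(0)=g'(0)=0$ one has $a \le b$; whenever $a<b$ strictly, picking $\epsilon < (b-a)/2$ gives $w_1, w_2 > 0$ on a right-neighborhood of $0$, after which the preceding paragraph applies on $(0,T]$.

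The main obstacle is the degenerate case $a=b$, still permitted by the hypothesis (for instance $f(u)=u^2-u^4$ against $g(u)=u^2$). There Theorem~\ref{thm1}'s leading-order $\sqrt{t}$ bounds overlap and fail to separate $u_1$ from $v$ as $t\to 0^+$. I would refine the sandwich in the style of the Remark by extracting the next subleading term in $g-f>0$ and producing an improved lower bound on $w_1$ near zero, after which the first-touching-time argument extends the inequality to all of $(0,T]$.
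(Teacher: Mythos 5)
Your first-crossing computation for $t_0>0$ is correct, but the proposal has two genuine problems. First, it is circular: in the paper Theorem~\ref{thm1} is \emph{proved from} Lemma~\ref{lemcomp} (the sandwich $u_1^+\le u_1\le u_1^-$ in \eqref{est1} is obtained by applying this lemma with $g=(a\pm\epsilon)u^2$), so you cannot invoke Theorem~\ref{thm1} to establish the initial separation $w_1,w_2>0$ near $t=0$. Second, even setting the circularity aside, the step you yourself flag as missing is the actual crux of the lemma. Both solutions emanate from $(0,0)$ where the vector field is singular, so the infimum $t_0$ of bad times can be $0$, and the sign of $\dot w_1$ at a crossing time tells you nothing unless you first know $w_1,w_2\ge 0$ on some initial interval. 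In the degenerate case $a=b$ (which the hypotheses permit, as your example $f=au^2-u^4<au^2=g$ shows) no leading-order bound separates $u_1$ from $v$, and ``extract the next subleading term'' is not a proof: the subleading behaviour of $u_1$ is exactly what one does not control at this stage of the paper.

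The paper's proof is designed precisely to avoid ever arguing at $t=0$. Assuming $u_1(t_0)<v(t_0)$ for some small $t_0>0$, it picks an auxiliary $g$-solution $(v_1,-v_1)$ with $u_1(t_0)<v_1(t_0)<v(t_0)$; by the analysis in Lemma~\ref{lem2.2} (monotone dependence on the data at $t_0$ plus uniqueness of the solution through the origin), this $v_1$, run backwards, hits the singular curve $g(v_1)=t$ at some strictly positive time $t_1$. A first-crossing argument of the same flavour as yours — but now between $(u_1,u_2)$ and the \emph{strictly separated} solution $(v_1,-v_1)$, so the touching time is bounded away from $0$ — traps $u_1<v_1$ or $u_2>-v_1$ down to $t_1$, whence $f(u_{1,2})-t_1<g(v_1)-t_1=0$, contradicting the standing constraint $f(u_{1,2})-t>0$. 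The contradiction is reached at $t_1>0$, so the degeneracy at the origin never enters. To repair your proof you would need either this auxiliary-solution device or an independent (non-circular) initial separation estimate; as written, the argument does not close.
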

\begin{proof}
	Again, we use the shooting method to prove this lemma. We prove the first statement by contradiction. Suppose it is not true, then there exists a $t_0>0$ small enough such that $u_1(t_0)<v(t_0)$ or $u_2(t_0)>-v(t_0)$. Without loss of generality, we assume the former case, and that $-u_2(t_0)\ge u_1(t_0)$. From the previous lemma, there exists a $g$-solution $(v_1,-v_1)$ with $v_1(t_0) > u_1(t_0)$, and this solution hits $g(v_1)-t=0$ line before $t=0$, meaning there is $t_1>0$ such that:
\begin{equation*}
f\left(v_1 - \int_{t_1}^{t_0} F^g_1(v_1,-v_1)\rd{s}\right) = t_1\,.
\end{equation*}
Here $F^g_1(u_1,u_2):=\frac{1}{2}(u_1-u_2)\frac{1}{g(u_1)- t}$ is the forcing term for $v_1$ defined by $g$.

	We claim that there is no $t\le t_0$ such that $u_1(t)\ge v_1(t),u_2(t)\le -v_1(t)$. In fact, this is not true at $t=t_0$. Suppose $t_2$ is the largest time such that this holds, then without loss of generality, we assume that $u_1(t_2)=v_1(t_2)$. Then we have:
	\begin{equation*}
	\begin{split}
	F_1^f(u_1(t_2),u_2(t_2)) & := \frac{1}{2}(u_1(t_2)-u_2(t_2))\frac{1}{f(u_1(t_2))-t_2} \ge \frac{1}{2}(v_1(t_2)+v_1(t_2))\frac{1}{f(v_1(t_2))-t_2} \\
	& > \frac{1}{2}(v_1(t_2)+v_1(t_2))\frac{1}{g(v_1(t_2))-t_2} = F_1^g(v_1(t_2),-v_1(t_2))\,.
	\end{split}
	\end{equation*}
	This contradicts the choice of $t_2$. See Figure \ref{theorem_fig2} (left) for an illustration.
	
	This claim contradicts the fact that $(u_1,u_2)$ can be continued to the time $t_1$, since at this time, $f(u_1)-t < g(u_1)-t \le g(v_1)-t = 0$ or $f(u_2)-t < g(u_2)-t \le g(-v_1)-t = 0$. Thus the first statement is proved. See Figure \ref{theorem_fig2} (right) for an illustration. The second statement can be proved similarly.
\end{proof}
\begin{figure}
\centering
	\includegraphics[width=0.4\textwidth,height=0.25\textheight]{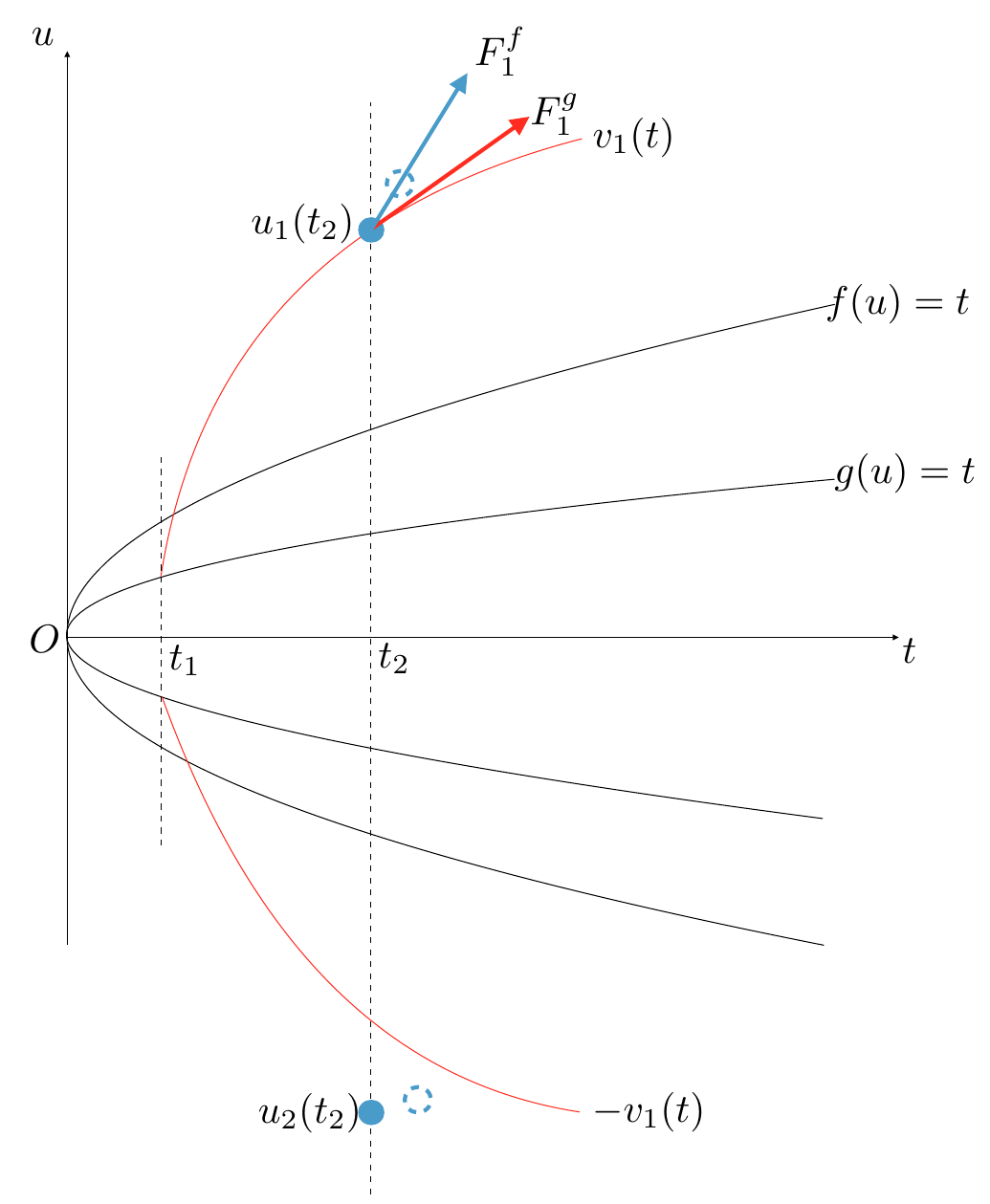}
	\includegraphics[width=0.4\textwidth,height=0.25\textheight]{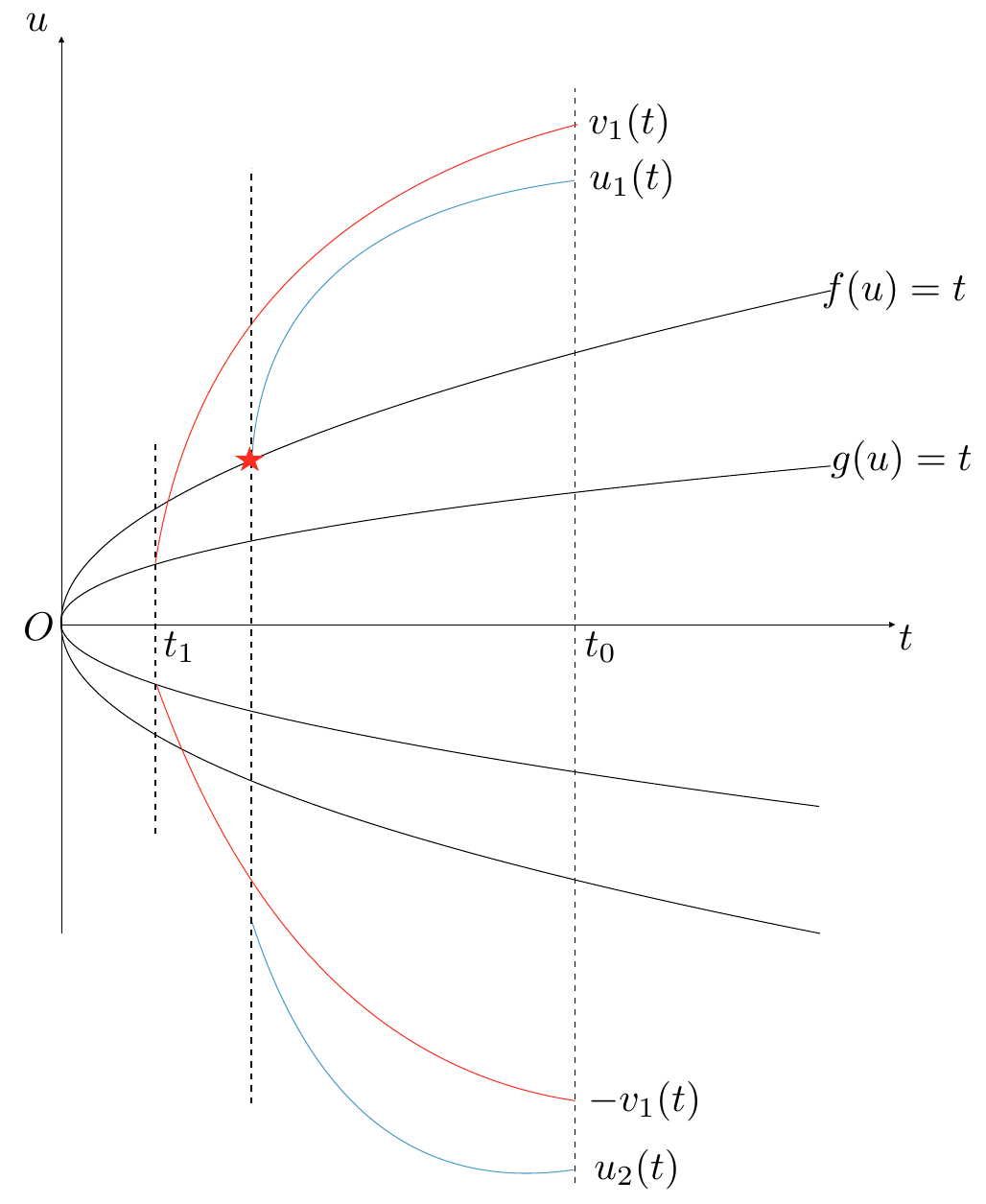}
	\caption{Proof of Lemma \ref{lemcomp}. Left: obtaining a contradiction at time $t_2$. The dashed circle is the approximate position of $u_{1,2}$ at time slightly larger than $t_2$, which contradicts the choice of $t_2$. Right: obtaining the final contradiction. $u_1$ or $u_2$ must touches the curve $f(u)=t$ at some time larger than $t_1$ (the star in the picture).}\label{theorem_fig2}
\end{figure}

We now are ready to show Theorem~\ref{thm1_deterministic}.
\begin{proof}	
Locally at $t=0$, $f(u)\sim au^2$ and thus we approximate~\eqref{ode01} by:
\begin{equation}\label{ode1}
\frac{\rd{u_1}}{\rd{t}} = \frac{1}{2}(u_1-u_2)\frac{1}{au_1^2-t}\,,\quad \frac{\rd{u_2}}{\rd{t}} = -\frac{1}{2}(u_1-u_2)\frac{1}{au_2^2-t}\,,
\end{equation}
which has solution
\begin{equation*}
u_1=-u_2 = (\frac{3}{a}t)^{1/2}\,.
\end{equation*}
To estimate the solution of~\eqref{ode01} near $t=0$, we let $\epsilon<a$, and find $\delta$ such that
	\begin{equation}\label{fu2}
	|f(u)-au^2| < \epsilon u^2 \quad (\forall |u|<\delta)\,,
	\end{equation}
	then according to Lemma \ref{lemcomp}, for small $t$ (small enough so that $u_1^-<\delta$ and $-u_2^-<\delta$)
	\begin{equation}\label{est1}
	u_1^+\le u_1 \le u_1^-, u_2^-\le u_2 \le u_2^+\,,
	\end{equation}
	where $u_i^\pm$ is the solution of (\ref{ode1}) with $a$ replaced by $a\pm \epsilon$, and we conclude the theorem.
\end{proof}

\begin{remark}
We note that the forcing terms in~\eqref{ode01} are Lipschitz away from $f(u) = \alpha t$, and the system is automatically wellposed there. The main difficulty lies in ``small time" regime where $f(u^\ast) = \alpha t^\ast$.
\end{remark}

\section{Smoothness in $z$-space}\label{sec:random}

{As we discussed in the introduction}, there are two sources of uncertainties {in the Burgers equation~\eqref{eq}}: the initial condition $u_\text{in}(x,z)$ and the traveling speed of the wave $\alpha(z)$. In Theorem~\ref{thm_main1} we claim that the physical quantities such as $t^\ast$ (the shock emerging time), $t^\sharp$ (the time for the shock hitting the land), and $x^c$ (the shock location) are smooth functions of $z$, and in Theorem~\ref{thm_tilde1}, we claim that with proper shifting, the solution profile depends on $z$ smoothly as well. These two theorems are proved in Section~\ref{sec:smooth_physical} and Section~\ref{sec:smooth_profile} respectively.

\subsection{Smoothness of physical quantities}\label{sec:smooth_physical}
The main goal in this subsection is to prove Theorem~\ref{thm_main1}, which states that the physical quantities smoothly depend on $z$. We start by proving item (1) of Theorem \ref{thm_main1}:
\begin{proof}[Proof of item (1) of Theorem \ref{thm_main1}]
In the deterministic case, we have shown that 
\begin{equation*}
t^\ast = -\frac{1}{\alpha u_\text{in}'(x^\ast)}\,,
\end{equation*}
and thus to show the regularity of $t^\ast$ on $z$ amounts to showing the regularity of $x^*$ on $z$, since $\alpha$ and $u_{in}$ are assumed to be smooth in $z$. Take the first derivative for example:
\begin{equation}\label{dztast}
\partial_zt^\ast = \frac{1}{(\alpha u_\text{in}'(x^\ast))^2}\partial_z\left(\alpha u_\text{in}'(x^\ast)\right)=\frac{\partial_z\alpha u'_\text{in}(t_\ast)+\alpha\partial_zu'_\text{in}(x^\ast,z)+\alpha u''_\text{in}(x^\ast,z)\partial_zx^\ast}{(\alpha u_\text{in}'(x^\ast))^2}\,.
\end{equation}
$\partial_zu_\text{in}$ and $\partial_z\alpha$ are known to be bounded quantities, and by definition
\[
u_\text{in}''(x^*(z),z) = 0\,,
\]
which gives $|\partial_zt^\ast|<C$, meaning $t^\ast$ is Lipschitz continuous in $z$. Higher derivatives can be analyzed in a similar way except that one also needs to analyze $\partial^k_z x^\ast$. It is a bounded quantity as well and we show it for $k=1$. Since
\begin{equation}
u_\text{in}''(x^*(z),z) = 0\quad\Rightarrow\quad u_\text{in}'''(x^*(z),z)\partial_z x^* + \partial_z u_\text{in}''(x^*(z),z) = 0\,,
\end{equation}
which gives
\begin{equation}\label{dzxst}
\partial_z x^* =-\frac{ \partial_z u_\text{in}''(x^*(z),z)}{u_\text{in}'''(x^*(z),z)}\,.
\end{equation}
\end{proof}

Proving item (2) and (3) in Theorem~\ref{thm_main1} requires more delicate analysis and we leave them to the next subsection. Item (4), however, is a direct corollary of (2).
\begin{proof}[Proof of item (4) of Theorem \ref{thm_main1}, assuming item (2)]
According to the definition:
\begin{equation}\label{tsh}
t^\sharp = \inf \{t:x^c(t)\ge x_0\}\quad\Rightarrow\quad x^c(t^\sharp) = x_0\,,
\end{equation}
meaning $t^\sharp=t^\sharp(x_0)$ is the inverse function of $x^c(t)$ evaluated at $x_0$. According to item (2) in Theorem \ref{thm_main1}, $x^c(t,z)$ is smooth in $z$, then taking $z$-derivative on (\ref{tsh}) gives
\[
\partial_z x^c(t^\sharp,z) + \partial_t x^c(t^\sharp,z)\partial_z t^\sharp = 0\,,
\]
which shows that $|\partial_z t^\sharp|<\infty$ by the assumption that  $\partial_t x^c(t^\sharp,z) \ne 0$. Higher order $z$-derivatives can be handled in the same way.
\end{proof}

We now concentrate on showing items (2) and (3) of Theorem \ref{thm_main1}, which state the smooth dependence of $x^c$ and $u_1-u_2$ on $z$. We divide the proof into two parts: we will first prove the smoothness assuming all the initial shocks are generated at $t^\ast = 0$ and $u^\ast=0$, meaning $u_{1,2}(t=0,z) = 0$ for all $z$; we then shift $(t^\ast,u^\ast)$ to accommodate the general situation stated in Theorem~\ref{thm_main1}. The first part of the proof is summarized in Proposition~\ref{thm:random_u_1} and Proposition~\ref{thm:x_c}, and the second part of the proof follows.

\begin{proposition}\label{thm:random_u_1}
Consider~\eqref{ode01} with initial condition~\eqref{odei} and $\alpha=1$. Suppose the initial profile represented by $-x'_\text{in}(u) = f(u)$ has smooth $z$-dependence, i.e., $f(u;z)\in C^\infty(\mathbb{R}_u,\mathbb{R}_z)$, then for $t$ small enough:
\begin{itemize}
\item[(1)] the $z$-derivatives of $u_1,u_2$ satisfy the estimate
\begin{equation*}
\partial_zu_{1,2} = \mathcal{O}(t^{1/2})\,,
\end{equation*}
\item[(2)] the higher $z$-derivatives of $u_1,u_2$ satisfy:
\begin{equation*}
\partial_z^ku_{1,2} = \mathcal{O}(t^{1/2})\,,
\end{equation*}
\item[(3)] the higher $(z,t)$-derivatives in time satisfy:
\begin{equation*}
\partial_z^k\partial_t^{k'}u_{1,2} = \mathcal{O}(t^{1/2-k'})\,.
\end{equation*}
\end{itemize}
\end{proposition}

\begin{proof}
To obtain the regularity in $z$ direction, one basically needs to take $z$ derivatives and show the bounds. We start with first order derivative of~\eqref{ode01} to show item (1):
\begin{equation*}
\begin{split}
\frac{\rd{\partial_z u_1}}{\rd{t}} & = \frac{1}{2}(\partial_z u_1-\partial_z u_2)\frac{1}{f(u_1)-t} - \frac{1}{2}(u_1-u_2)\frac{1}{(f(u_1)-t)^{2}}(f'(u_1)\partial_z u_1 + \partial_z f(u_1))\,,\\
\frac{\rd{\partial_z u_2}}{\rd{t}} & = -\frac{1}{2}(\partial_z u_1-\partial_z u_2)\frac{1}{f(u_2)-t} + \frac{1}{2}(u_1-u_2)\frac{1}{(f(u_2)-t)^{2}}(f'(u_2)\partial_z u_2 + \partial_z f(u_2))\,.\\
\end{split}
\end{equation*}
In a compact form, it becomes:
\begin{equation}\label{eqn:der_z}
\begin{cases}
\frac{\rd{\partial_z u_1}}{\rd{t}} & = A_{11}\partial_z u_1 + A_{12}\partial_z u_2 + S_1\,,\\
\frac{\rd{\partial_z u_2}}{\rd{t}} & = A_{21}\partial_z u_1 + A_{22}\partial_z u_2 + S_2\,,
\end{cases}
\end{equation}
with initial data
\begin{equation*}
\partial_z u_1(0) = \partial_z u_2(0) = 0\,.
\end{equation*}
Here $A$ terms are the linear terms and $S_{1,2}$ are sources. The terms in $A$ and $S$ can be estimated using the results in Theorem \ref{thm1_deterministic} which states that $u_{1,2}(t)\approx\pm(3a^{-1}t)^{1/2}$, so
\begin{equation*}
A_{11} = \frac{1}{2}\frac{1}{f(u_1)-t}- \frac{1}{2}(u_1-u_2)\frac{1}{(f(u_1)-t)^{2}}f'(u_1) \approx -\frac{5}{4t}\,,
\end{equation*} 
and similarly:
\begin{equation*}
A_{12} = -\frac{1}{2}\frac{1}{f(u_1)-t} \approx -\frac{1}{4t}\,,\quad A_{21}\approx -\frac{1}{4t},\quad A_{22}\approx -\frac{5}{4t}\,.
\end{equation*}
Here we use the notation $A(t)\approx B(t)$ to mean $\lim_{t\rightarrow0+}\frac{A(t)}{B(t)}=1$. Noting that $f(u;z)\sim a(z)u^2$ (see (\ref{au2})), $\partial_zf\sim \partial_zau^2$, and thus
\[
S_1 =- \frac{1}{2}(u_1-u_2)\frac{1}{(f(u_1)-t)^{2}}\partial_zf(u_1) =\mathcal{O}(t^{-1/2}),\quad S_2=\mathcal{O}(t^{-1/2})\,.
\]

Then we perform the standard energy estimate of $L^2$ type for~\eqref{eqn:der_z} by multiplying it on both sides with $\partial_zu_{1,2}$ to have:
\begin{equation*}
\begin{split}
\frac{1}{2}\frac{\rd}{\rd{t}}(\partial_zu_1)^2 & \le -(\frac{5}{4}-\epsilon)\frac{1}{t} (\partial_zu_1)^2 + (\frac{1}{4}+\epsilon)\frac{1}{t}|\partial_zu_1\partial_zu_2| + Ct^{-1/2}|\partial_zu_1|\,, \\
\frac{1}{2}\frac{\rd}{\rd{t}}(\partial_zu_2)^2 & \le -(\frac{5}{4}-\epsilon)\frac{1}{t} (\partial_zu_2)^2 + (\frac{1}{4}+\epsilon)\frac{1}{t}|\partial_zu_1\partial_zu_2| + Ct^{-1/2}|\partial_zu_2|\,. \\
\end{split}
\end{equation*}
Add the two inequalities and use the fact that 
\begin{equation*}
|\partial_zu_1\partial_zu_2| \le \frac{1}{2}((\partial_zu_1)^2 + (\partial_zu_2)^2)\,,\quad\text{and}\quad t^{-1/2}|\partial_zu_1| \le \epsilon_1\frac{1}{t}(\partial_zu_1)^2 + \frac{1}{4\epsilon_1},\quad \forall \epsilon_1 >0\,,
\end{equation*}
one gets:
\begin{equation*}
\frac{1}{2}\frac{\rd}{\rd{t}}((\partial_zu_1)^2+(\partial_zu_2)^2) \le -\left(\frac{5}{4}-\epsilon-(\frac{1}{4}+\epsilon) -C\epsilon_1\right)\frac{1}{t}((\partial_zu_1)^2+(\partial_zu_2)^2) + \frac{C}{2\epsilon_1}\,.
\end{equation*}
Choosing $\epsilon=1/4,\quad \epsilon_1 = 1/(2C)$, one gets
\begin{equation*}
\frac{\rd}{\rd{t}}((\partial_zu_1)^2+(\partial_zu_2)^2) \le 2C^2\,,
\end{equation*}
which finishes the proof of item (1).

Extending it to higher derivatives requires mathematical induction. We assume $\partial_z^ju_{1,2}=\mathcal{O}(t^{1/2})$ holds true for all $j<k$, and we now show it for the $k$-th derivative as well. Taking the $k$-th derivative on $z$ we have:
\begin{equation}\label{eqn:der_zk}
\begin{cases}
\frac{\rd{\partial_z^k u_1}}{\rd{t}} & = A_{11}\partial_z^k u_1 + A_{12}\partial_z^k u_2 + S_1^k\,,\\
\frac{\rd{\partial_z^k u_2}}{\rd{t}} & = A_{21}\partial_z^k u_1 + A_{22}\partial_z^k u_2 + S_2^k\,,
\end{cases}
\end{equation}
where $A_{mn}$ has the same definition as in~\eqref{eqn:der_z} ($m,n=1,2$). The source term $S_1^k$ is, however, much more complicated:
\begin{equation}\label{eqn:S_term}
S_1^k=\sum \frac{c\partial_z^{r_1}(u_1-u_2)}{(f(u_1)-t)^{1+r_2}}\prod_{j=1}^{r_2}\left( \partial_z^{r_{3,j}}\partial_u^{r_{4,j}}f(u_1)\prod_{l=1}^{r_{4,j}}\partial_z^{r_{5,j,l}}u_1\right)\,,
\end{equation}
where $c$ is a constant depending on the summation indices, and the indices in the summation satisfy the relation
\begin{equation*}
r_1+\sum_{j=1}^{r_2}(r_{3,j}+\sum_{l=1}^{r_{4,j}}r_{5,j,l}) = k\,,\quad r_1\le k-1\,,\quad r_{5,j,l}\le k-1\,.
\end{equation*}
Noting that
\begin{equation*}
f(u_1) \sim u_1^2 = \mathcal{O}(t),\quad f'(u_1)\sim u_1=\mathcal{O}(t^{1/2}),\quad\text{and}\quad \partial_u^rf(u_1) = \mathcal{O}(1),\,\,r\ge 2\,,
\end{equation*}
and as a result, $\partial_u^rf(u_1) \lesssim \mathcal{O}(t^{1-r/2}),\,\,r\ge 0$. In view of $\partial_z^lu_1 = \mathcal{O}(t^{1/2})$ for $l\le k-1$, the order of the term~\eqref{eqn:S_term} is (in term of the power of $t$)
\begin{equation*}
\frac{1}{2} - (1+r_2) + \sum_{j=1}^{r_2} ((1-\frac{r_{4,j}}{2}) + \frac{r_{4,j}}{2}) = -\frac{1}{2}\,.
\end{equation*}
The term $S_2^k$ can be analyzed in the same way. Using the energy estimate we conclude with the result. Item (3) is obtained using the induction argument as well, and we leave the proof to appendix.
\end{proof}

\begin{proposition}\label{thm:x_c}
With the same assumptions as in Proposition~\ref{thm:random_u_1}, one has
\begin{equation*}
\partial_z^k\partial_t^{k'}x^c = \chi_{(k'=0)}\partial_z^k x^*(z) + \mathcal{O}(t^{3/2-k'})\,.
\end{equation*}
\end{proposition}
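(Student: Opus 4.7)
The plan is to derive the result by a direct integration of the governing ODE for $x^c$, using Proposition~\ref{thm:random_u} to control the integrand after differentiation in $z$. Under the normalization $t^* = 0$, $u^* = 0$, the defining relation~\eqref{eqn:center_location} becomes $\frac{d}{dt} x^c = \frac{u_1(t) + u_2(t)}{2}$ with $x^c(0) = x^*(z)$. Since the right-hand side is linear in $u_1$ and $u_2$ and the initial datum $x^*(z)$ is smooth in $z$ by assumption, differentiating $k$ times in $z$ gives the linear ODE
$$\frac{d}{dt}\partial_z^k x^c(t) = \frac{\partial_z^k u_1(t) + \partial_z^k u_2(t)}{2}, \qquad \partial_z^k x^c(0) = \partial_z^k x^*(z).$$

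Integrating in time from $0$ to $t$ yields
$$\partial_z^k x^c(t) = \partial_z^k x^*(z) + \int_0^t \frac{\partial_z^k u_1(s) + \partial_z^k u_2(s)}{2}\,ds.$$
Now I invoke Proposition~\ref{thm:random_u}: for $t$ small enough and all $z$, there exists $C>0$ such that $|\partial_z^k u_i(s)| \le C s^{1/2}$ for $s\in(0,t]$ and $i=1,2$. (For $k=0$ the same $t^{1/2}$ bound is provided directly by Theorem~\ref{thm1}.) Substituting into the integral,
$$\left|\int_0^t \frac{\partial_z^k u_1(s)+\partial_z^k u_2(s)}{2}\,ds\right| \le \int_0^t C s^{1/2}\,ds = \frac{2C}{3} t^{3/2} = \mathcal{O}(t^{3/2}),$$
which is exactly the claimed remainder. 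The extra half-power gained over the $\partial_z^k u_{1,2} = \mathcal{O}(t^{1/2})$ bound is purely a consequence of integrating the $s^{1/2}$ weight.

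There is essentially no main obstacle here; the work has already been done in establishing Proposition~\ref{thm:random_u}. The only technical points to verify are (i) the interchange of $\partial_z^k$ with $\frac{d}{dt}$, which is legitimate because the trajectories $u_{1,2}(t;z)$ and their $z$-derivatives are uniformly bounded and continuous away from the singular initial time — the same reasoning that underlies the inductive $L^2$ energy estimates in Propositions~\ref{thm:random_u_1} and~\ref{thm:random_u}; and (ii) the uniformity in $z$ of the constant $C$ in the bound on $\partial_z^k u_{1,2}$, which follows because the proof of Proposition~\ref{thm:random_u} yields constants depending only on the $C^k$ bounds of $f(u;z)$ and on a uniform lower bound for the quadratic coefficient $a(z) = \tfrac{1}{2} f''(0;z)$, both of which are guaranteed by the standing smoothness and convexity hypotheses on the initial data.
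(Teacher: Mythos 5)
Your proof is correct and follows essentially the same route as the paper: differentiate the ODE $\frac{\rd}{\rd t}x^c = \tfrac{1}{2}(u_1+u_2)$ $k$ times in $z$, invoke the $\mathcal{O}(t^{1/2})$ bound on $\partial_z^k u_{1,2}$ from Proposition~\ref{thm:random_u}, and integrate in time to gain the extra half power. The paper's own proof is just a terser version of the same argument; your added remarks on uniformity in $z$ and interchanging derivatives are sensible but not a departure.
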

\begin{proof}
It follows easily from checking~\eqref{eqn:center_location} which holds true on $t>t^\ast=0$ with $x^c(0) = x^\ast$. Integrating (\ref{eqn:center_location}) in $t$ we get
\begin{equation}
x^c(t,z) = x^*(z) + \int_0^t \frac{u_1(s,z)+u_2(s,z)}{2}\rd{s}\,.
\end{equation}
Taking its $(z,t)$-derivative, we have
\begin{equation}
\partial_z^kx^c(t,z) = \partial_z^kx^*(z) + \int_0^t \frac{\partial_z^ku_1(s,z)+\partial_z^ku_2(s,z)}{2}\rd{s}\,,
\end{equation}
when $k'=0$ and
\begin{equation}
\partial_z^k\partial_t^{k'} x^c=  \partial_t^{k'}\left(\int_0^t \frac{\partial_z^ku_1(s,z)+\partial_z^ku_2(s,z)}{2}\rd{s} \right) = \frac{\partial_z^k\partial_t^{k'-1}u_1(t,z)+\partial_z^k\partial_t^{k'-1}u_2(t,z)}{2} \,,
\end{equation}
when $k'>0$. These combined with Proposition \ref{thm:random_u_1} give the conclusion.
\end{proof}

With these preparations for equations with special initial data $u^\ast = t^\ast = 0$, we are ready to perform shifting for proving Theorem~\ref{thm_main1}.

\begin{proof}[Proof of items (2) and (3) of Theorem~\ref{thm_main1}]

We translate $u_1,u_2$ to enforce the initial condition (\ref{odei}) by defining $\bar{u}_1,\bar{u}_2$ as
\begin{equation}\label{eqn:shifting_t}
\bar{u}_{1,2}(t,z) = u_{1,2}\left(\frac{1}{\alpha(z)}t+t^\ast(z),z\right)-u^*(z)\,.
\end{equation} 
$\bar{u}_{1,2}(t,z)$ then satisfies the same system (\ref{ode01}) with initial condition (\ref{odei}) and $\alpha=1$. $f$, however, is also shifted:
\begin{equation}
\bar{f}(\bar{u},z) = f(\bar{u}+u^*(z)) - \alpha(z)t^*(z)\,.
\end{equation}
It is clear that $\bar{f}$ is smooth and satisfies Assumption \ref{ass:x_u_initial2}. By the assumption that there exists $\delta>0$ such that $u_++\delta \le u^\ast(z) \le u_--\delta$ for all $z$, we have $\bar{f}$ well-defined for $u\in[-\delta,\delta]$.

According to Proposition~\ref{thm:random_u_1}:
\begin{equation}
\partial_z^k\partial_t^{k'}\bar{u}_{1,2}=\mathcal{O}(t^{1/2-k'})\,,
\end{equation}
and considering $u_{1,2}(t,z) = \bar{u}_{1,2}(\alpha(z)(t-t^*(z)),z)+u^*(z)$, and taking the smoothness of $\alpha(z)$, $t^*(z)$ and $u^*(z)$ into account, we obtain the estimate
\begin{equation}\label{dku12}
\partial_z^k u_{1,2} =  \mathcal{O}((t-t^*)^{1/2-k})\,,
\end{equation}
which implies item (3) of Theorem~\ref{thm_main1}. To estimate $x^c$, we change $u_{1,2}$ to $\bar{u}_{1,2}$ as well:
\begin{equation*}
\begin{split}
x^c(t,z) = & x^*(z) + \alpha(z)\int_{t^*(z)}^t \frac{u_1(s,z)+u_2(s,z)}{2}\rd{s} \\ 
= & x^*(z) + \alpha(z)\int_{t^*(z)}^t \frac{\bar{u}_{1}(\alpha(s-t^*(z)),z)+\bar{u}_{2}(\alpha(s-t^*(z)),z))}{2}+u^*(z)\rd{s} \\
= & x^*(z) + \frac{1}{2}\int_0^{\alpha(z)(t-t^*(z))}\left[\bar{u}_1(s,z)+\bar{u}_2(s,z)\right]\rd{s} + \alpha(z)u^*(z)(t-t^*(z)) \\
= &\bar{x}^c(\alpha(z)(t-t^*(z)),z) + \alpha(z)u^*(z)(t-t^*(z)) \,.
\end{split}
\end{equation*}
Take its $z$ derivative up to order $k$ on both sides and use Proposition \ref{thm:x_c} together with the smoothness of  $\alpha(z)$, $t^*(z)$ and $u^*(z)$, we conclude the theorem:
\begin{equation*}
\partial_z^k x^c(t,z) = \partial_z^k x^*(z) + \mathcal{O}(t^{3/2-k}) + \mathcal{O}(1) = \mathcal{O}(t^{\min\{3/2-k,0\}})\,.
\end{equation*}
 
\end{proof}

\subsection{Smoothness of the shifted solution profile}\label{sec:smooth_profile}
This is to mathematically justify Theorem \ref{thm_tilde1} which guarantees the $z$-regularity of the shifted solution $\tilde{u}$.
\begin{proof}[Proof of Theorem \ref{thm_tilde1}]
Recall $\tilde{u}$ in~\eqref{tildeu} and we take its $k$-th derivative in $z$:
\begin{equation}\label{dzkut}
\partial_z^k \tilde{u} = \sum c\partial_t^{r_1}\partial_x^{r_2}\partial_z^{r_3}u \prod_{j=1}^{r_2} \partial_z^{r_{4,j}}t^*(z) \prod_{j=1}^{r_2}\partial_z^{r_{5,j}}(x^c(t+t^*(z),z))\,,
\end{equation}
where the indices satisfy
\begin{equation}
r_3 + \sum_{j=1}^{r_1}r_{4,j} + \sum_{j=1}^{r_2}r_{5,j} = k\,.
\end{equation}
Then we further expand
\begin{equation}\label{eqn:dz_xc}
\partial_z^{r_{5,j}}(x^c(t+t^*(z),z)) = \sum c \partial_t^{r_{6,j}}\partial_z^{r_{7,j}}x^c\prod_{l=1}^{r_{6,j}}\partial_z^{r_{8,j,l}}t^*(z)\,,\quad\text{with}\quad r_{7,j}+\sum_{l=1}^{r_{6,j}}r_{8,j,l} = r_{5,j}\,.
\end{equation}

The second factor in (\ref{dzkut}) is $\partial_z^{r_{4,j}}t^*(z)$ which is of $\mathcal{O}(1)$. To deal with the third factor of (\ref{dzkut}), we use~\eqref{eqn:dz_xc} and only evaluate $\partial_t^{r_{6,j}}\partial_z^{r_{7,j}}x^c$, with $r_{6,j}+r_{7,j}\le r_{5,j}$. According to Proposition \ref{thm:x_c} we have
\begin{equation}\label{eqn:dtzxc}
\partial_t^{r_{6,j}}\partial_z^{r_{7,j}}x^c(t+t^*(z),z)=\mathcal{O}(t^{\min\{3/2-r_{6,j},0\}})\,.
\end{equation}

The first factor $\partial_t^{r_1}\partial_x^{r_2}\partial_z^{r_3}u$ is more complicated. To do that we take the derivative $\partial_t^{r_1}\partial_x^{r_2}\partial_z^{r_3}$ of
$x(t,u(t,x,z),z)=x$ for:
\begin{equation*}
\partial_u x \partial_t^{r_1}\partial_x^{r_2}\partial_z^{r_3}u + \sum c \partial_t^{r_1'}\partial_u^{r_2'}\partial_z^{r_3'}x \prod_{j=1}^{r_2'}\partial_t^{r_{4,j}'}\partial_x^{r_{5,j}'}\partial_z^{r_{6,j}'}u = \chi_{(r_1=r_3=0,r_2=1)}\,,
\end{equation*}
with
\begin{equation*}
r_1'+\sum_{j=1}^{r_2'}r_{4,j}' = r_1,\quad r_3'+\sum_{j=1}^{r_2'}r_{6,j}' = r_3,\quad \sum_{j=1}^{r_2'}r_{5,j}' = r_2\,.
\end{equation*}
Since $x(t,u,z) = x_\text{in}(u,z) + ut$ is away from the shock, all the derivatives of $x$ are of $\mathcal{O}(1)$. Thus one can show by induction on $r_1+r_2+r_3$ that
\begin{equation}\label{eqn:dtxzu}
\partial_t^{r_1}\partial_x^{r_2}\partial_z^{r_3}u = \mathcal{O}(|\partial_x u|^{2(r_1+r_2+r_3)-1})\,,
\end{equation}
where $|\partial_x u|$ is supposed to be large near shock. We then claim that 
\begin{equation}\label{eqn:claim_pu_x}
\left|\partial_x u(t+t^*(z),x+x^c(t+t^*(z),z),z)\right|\le \frac{2}{|x|}\,.
\end{equation}
In fact, suppose $x>0$, then
\begin{equation*}
u(t+t^*,x+x^c,z)-u(t+t^*,x^c,z) = \int_{x^c}^{x+x^c}\partial_x u(t+t^*,y,z)\rd{y} \le x \partial_x u(t+t^*,x+x^c,z)\,,
\end{equation*}
where the inequality holds because $u(t,x,z)$ is convex in $x$ for $x>x^c(t,z)$, and thus $\partial_x u(t,y,z) \le \partial_x u(t,x+x^c,z)$. Taking absolute value and using $\partial_x u<0$, we get
\begin{equation*}
\begin{split}
|x\partial_x u(t+t^*,x+x^c,z)| \le & |u(t+t^*,x+x^c,z)-u(t+t^*,x^c,z)| \\ \le & |u(t+t^*,x+x^c,z)|+|u(t+t^*,x^c,z)| \le 2\,,
\end{split}
\end{equation*}
which leads to~\eqref{eqn:claim_pu_x}. The case $x<0$ is similar.

In conclusion, plugging~\eqref{eqn:dtxzu} and~\eqref{eqn:dtzxc} into~\eqref{dzkut}, and using the fact that $r_1+r_2+r_3\le k$ and $\sum_{j=1}^{r_2}r_{6,j}\le \sum_{j=1}^{r_2}r_{5,j} \le k$, we conclude the theorem.
\end{proof}

Theorem \ref{thm_interp0} immediately follows from the following proposition. It is a standard result from approximation theory and we leave the proof to the appendix.
\begin{proposition}\label{thm_interp}
Let $f=f(z)\in C^{m+1}(-1,1)$. Then the polynomial interpolation (\ref{poly_interp}) has $m$-th order accuracy:
\begin{equation}\label{thm_interp_1}
|f(z)-f^N(z)| \le \frac{C(m)\|\partial_z^{m+1}f\|_{L^\infty}}{N^m},\quad \forall z\in [-1,1],
\end{equation}
for $N\ge 2m$. Furthermore, if $\pi(z)$ is supported on $[-1,1]$, then we have the error estimate
\begin{equation}\label{thm_interp_2}
|\mathbb{E}(f)-\mathbb{E}(f^N)| \le \frac{C(m)\|\partial_z^{m+1}f\|_{L^\infty}}{N^m}\,,
\end{equation}
\begin{equation}\label{thm_interp_3}
 |\textnormal{var}(f)-\textnormal{var}(f^N)| \le \frac{C(m)(\min\{\|f-f^N\|_{L^\infty},N^2\|f\|_{L^\infty}\}+\|f\|_{L^\infty})\|\partial_z^{m+1}f\|_{L^\infty}}{N^m}\,.
\end{equation}
\end{proposition}


\section{General scalar conservation laws with convex fluxes}\label{sec:general}
All the results for the Burgers' equation can be extended to study general scalar conservation laws with convex flux term. The proof itself is tedious but contains little novelty and thus we only outline the strategies. In the general cases, the equation writes:
\begin{equation}\label{eqF}
\begin{cases}
\partial_t u + \partial_x F(u) = 0\,,\\
\lim_{x\rightarrow \pm\infty}u_\text{in}(x) = \mp 1\,.
\end{cases}
\end{equation}
where the flux function $F$, deterministic, is smooth and strictly convex. We also assume that the initial data is decreasing, and therefore the inverse function $x(u)$ is well-defined on $(-1,1)$. Note that the domain can be generalized to treat $(u_+, u_-)$. The derivation is the same: one flips $x-u$ coordinates and derives the equation for $x(u)$ as a function of $u$. The reformulation allows us to obtain an explicit expression for the dynamics of the physical quantities such as $t^\ast$, $t^\sharp$ and $x^c$. As done for the Burgers' equation we first reformulate the equation, obtain the ODE system, and study its dependence on the unknown variable $z$.

\subsection{Reformulation of the equation}
\subsubsection{Before shocking emergence}
As done for the Burgers' equation, upon flipping $x$ and $u$, one writes the dynamics of $x(u)$ as:
\begin{equation}\label{eq1F}
\begin{cases}
\partial_tx(t,u) = F'(u)\,,\quad u\in(-1,1)\,,\\
x_\text{in}(u) = x(t=0,u)\,.
\end{cases}
\end{equation}
Convex $F$ gives the increasing $F'$. We denote the inverse function $G$:
\begin{equation}
G(F'(u)) = F'(G(u)) = u\,.
\end{equation}
Plugging it back into~\eqref{eq1F} and denoting $y(t,u) = x(t,G(u))$, we have:
\begin{equation*}
\begin{cases}
\partial_ty(t,u) = \partial_tx(t,G(u)) = u\,,\quad u\in(-1,1)\,,\\
y_\text{in}(u) = x(t=0,G(u)) = x_\text{in}(G(u))\,.
\end{cases}
\end{equation*}
As was done for the Burgers' equation, we take one more derivative on $u$ and obtain:
\begin{equation*}
\begin{cases}
\partial_t\partial_uy(t,u)  = 1\,,\quad u\in(-1,1)\,,\\
y'_\text{in}(u) = x'_\text{in}(G(u))G'(u)\,.
\end{cases}
\end{equation*}
Therefore
\begin{equation*}
\partial_uy = y'_\text{in}(u)+t\,.
\end{equation*}
and equivalently, the earliest shock appears at $t^\ast = -\min y'_\text{in}(u)$ and we assume there is one and only one and set it as:
\begin{equation*}
t^\ast = -\min y'_\text{in}(u) = -y'_\text{in}(F'(u^\ast))\,.
\end{equation*}

\subsubsection{After the emergence of the shock}
Once the shock appears, on the $u-x$ plane, a ``flat" region appears. We denote $u_1$ and $u_2$ the top and the bottom of the shock point, then between $(u_2,u_1)$ the solution is a constant, which moves horizontally with speed:
\begin{equation*}
s = \frac{F(u_1)-F(u_2)}{u_1-u_2}\,,
\end{equation*}
meaning
\begin{equation}\label{eqn:center_locationF}
\frac{\rd}{\rd t}x^c = \frac{F(u_1)-F(u_2)}{u_1-u_2}\,,\quad\text{with}\quad x^c(t^\ast) = x^\ast\,.
\end{equation}
where $x^c$ denotes the shock location. With the same derivation as in Section~\ref{sec:reformulation}, one has:
\begin{equation}\label{ode01F}
\begin{cases}
\frac{\rd{u_1}}{\rd{t}} & = F_1(u_1,u_2) = \left(F'(u_1)-\frac{F(u_1)-F(u_2)}{u_1-u_2}\right)(f(u_1)-F''(u_1)t)^{-1}\,, \\
\frac{\rd{u_2}}{\rd{t}} & = F_2(u_1,u_2) = -\left(\frac{F(u_1)-F(u_2)}{u_1-u_2}-F'(u_2)\right)(f(u_2)-F''(u_2)t)^{-1}\,,
\end{cases}
\end{equation}
with initial condition $u_1(t^\ast) = u_2(t^\ast) = u^\ast$. Here we denoted $f(u) = -x'_\text{in}(u)$. Considering $F'$ is an increasing function, we see that
\begin{equation*}
\frac{\rd u_1}{\rd t} > 0 > \frac{\rd u_2}{\rd t}\,.
\end{equation*}

\subsubsection{Summary}
To summarize the reformulation, in the general convex flux case, when writes on $x(u)$ plane, $x$ satisfies equation
\begin{equation}\label{eqn:summary_x_u_F1}
\begin{cases}
t<t^\ast = -y'_\text{in}(u^\ast):\quad&\text{Equation~\eqref{eq1F}}\,,\\
t>t^\ast:\quad&
\begin{cases}
\text{Equation~\eqref{eq1F}}\quad \text{with}\quad u\in(-1,u_2)\cup(u_1,1)\,,\\
\text{Equation~\eqref{eqn:center_locationF}}\quad \text{with}\quad u\in (u_2,u_1)\,,
\end{cases}
\end{cases}
\end{equation}
with $u_1$ and $u_2$ being the shock locations satisfying the ODE system~\eqref{ode01F}.

\subsection{Shock behavior in small time (general flux)}
Assume a shock emerges at $t^\ast = 0,\,u^\ast = 0$, then to understand the short time behavior of the ODE system is equivalent to understanding the forcing terms in~\eqref{ode01F}. Near $u_{1,2}=0$ we can approximate:
\begin{equation}
\begin{split}
&F'(u_1)-\frac{F(u_1)-F(u_2)}{u_1-u_2}\\
= & F'(0)+F''(0)u_1 - \frac{F'(0)(u_1-u_2) + \frac{1}{2}F''(0)(u_1^2-u_2^2))}{u_1-u_2} +\mathcal{O}(u_1^2,u_2^2,u_1u_2)\\
= & \frac{1}{2}F''(0)(u_1-u_2) +\mathcal{O}(u_1^2,u_2^2,u_1u_2)\,,
\end{split}\,,
\end{equation}
and thus in the leading order:
\begin{equation}
\begin{cases}
\frac{\rd{u_1}}{\rd{t}} & = \frac{1}{2}F''(0)(u_1-u_2)(au_1^2-F''(0)t)^{-1}\,, \\
\frac{\rd{u_2}}{\rd{t}} &  = -\frac{1}{2}F''(0)(u_1-u_2)(au_2^2-F''(0)t)^{-1}\,,
\end{cases}
\end{equation}
where $a=\frac{a_1F''(0)^2}{G'(F'(0))}$ is a positive number. For small time, the solution is explicit:
\begin{equation}
u_1=-u_2 = (ct)^{1/2},\quad c = \frac{3F''(0)}{a}\,.
\end{equation}

\subsection{Regularities in the random space}
Studying the solution's regularity in the random space is the same as the analysis carried out in Section~\ref{sec:random}. Due to the complexity of the formula, we only present the first derivative in $z$ of~\eqref{ode01F}. One takes the first derivation of~\eqref{ode01F}:
\begin{equation}\label{ode01Fz}
\begin{split}
\frac{\rd{\partial_z u_1}}{\rd{t}} = & \left[F''_1\partial_z u_1-\frac{F'_1\partial_z u_1-F'_2\partial_z u_2}{u_1-u_2} + \frac{(F_1-F_2)(\partial_z u_1 - \partial_z u_2)}{(u_1-u_2)^2}\right](f_1-F''_1t)^{-1} \\
& - \left[F'_1-\frac{F_1-F_2}{u_1-u_2}\right](f_1-F''_1t)^{-2}(f'_1\partial_z u_1 - F'''_1\partial_z u_1 t + \partial_z f_1) \,,\\
\frac{\rd{\partial_z u_2}}{\rd{t}}  = & -\left[\frac{F'_1\partial_z u_1-F'_2\partial_z u_2}{u_1-u_2}- \frac{(F_1-F_2)(\partial_z u_1 - \partial_z u_2)}{(u_1-u_2)^2} - F''_1\partial_z u_1\right](f_2-F''_2t)^{-1} \\
& + \left[\frac{F_1-F_2}{u_1-u_2}-F'_2\right](f_2-F''_2t)^{-2}(f'_2\partial_z u_2 - F'''_2\partial_z u_2 t + \partial_z f_2) \,,\\
\end{split}
\end{equation}
where we have used $\gamma_{1,2}$ to denote $\gamma(u_1)$ or $\gamma(u_2)$ respectively for all quantities. In a compact form, it writes:
\begin{equation*}
\begin{cases}
\frac{\rd{\partial_z u_1}}{\rd{t}} & = A_{11}\partial_z u_1 + A_{12}\partial_z u_2 + S_1\,,\\
\frac{\rd{\partial_z u_2}}{\rd{t}} & = A_{21}\partial_z u_1 + A_{22}\partial_z u_2 + S_2\,,\\
\end{cases}\quad\text{with}\quad \partial_zu_{1,2}(0) = 0\,.
\end{equation*}
In the equation,
\begin{equation*}
\begin{split}
A_{11} = &  \left[F''_1 - \frac{F'_1}{u_1-u_2} + \frac{F_1-F_2}{(u_1-u_2)^2}\right](f_1-F''_1t)^{-1}  - \left[F'_1-\frac{F_1-F_2}{u_1-u_2}\right](f_1-F''_1t)^{-2}(f'_1 - F'''_1t)\,, \\
A_{12} = & \left[\frac{F'_2}{u_1-u_2} - \frac{F_1-F_2}{(u_1-u_2)^2}\right](f_1-F''_1t)^{-1}\,,\\
S_1 = &- \left[F'_1-\frac{F_1-F_2}{u_1-u_2}\right](f_1-F''_1t)^{-2}\partial_z f_1\,.
\end{split}
\end{equation*}
To analyze the term $A_{11}$, we note that
\begin{equation}
\begin{split}
& F''_1\approx F''(0)\,, \quad -\frac{1}{2}F''(0) \approx - \frac{F'_1}{u_1-u_2} + \frac{F_1-F_2}{(u_1-u_2)^2}\,,\\ 
& f(u_1)-F''(u_1)t \approx (ac-F''(0))t = 2F''(0)t \,,\\
& f'(u_1)-F'''(u_1)t \approx 2a(ct)^{1/2}\,,
\end{split}
\end{equation}
which allows us to bound
\begin{equation*}
A_{11}\approx \frac{1}{2}F''(0)\cdot \left(2F''(0)t\right)^{-1} - 2actF''(0)(2F''(0)t)^{-2} = -\frac{5}{4}t^{-1}\,,
\end{equation*}
Similarly one has:
\begin{equation*}
A_{22} \approx -\frac{1}{4}t^{-1}\,,\quad\text{and}\quad S_1 = \mathcal{O}(t^{-1/2})\,.
\end{equation*}
All together,
\begin{equation*}
\frac{\rd}{\rd t}\left[(\partial_zu_1)^2+(\partial_zu_2)^2\right] \leq C\,,
\end{equation*}
and $H_1(\rd z)$ norm of $u_{1,2}$ grows no more than a rate of $\mathcal{O}(t^{1/2})$. 

\section{Conclusion}
Uncertainty quantification for hyperbolic conservation laws is considered a very challenging task due to the intrinsic discontinuities in the solution in both physical and random spaces. Such discontinuities in the solution profile prevent the generalized polynomial chaos type methods to be effective. We give a counter-argument in this paper, and we demonstrate, under some mild assumptions on the initial condition, that:
\begin{itemize}
\item[1.] {there exists physical observables depending smoothly on external randomness};
\item[2.] with proper shifts of the solution in time and space, the entire solution profile also smoothly depends on the external randomness.
\end{itemize}
We have to emphasize that the main goal of the paper is not to justify the gPC method's use on hyperbolic systems, but rather, to provide a new perspective: for wave-like equations with randomness, solution profile may not be the right ``quantity of interests" to evaluate, and a slight change (the proper shifts) could regularize the problem significantly.

\newpage

\appendix
\section{Supplementary Proofs}\label{sec:appendixA}
For the completeness of the paper we include the proofs with tedious calculation here.
\subsection{Wellposedness of the ODE system~\eqref{ode01}}\label{sec:appendix_ode}
We show the wellposedness of the ODE system~\eqref{ode01}. In fact, the two forcing terms $F_1$ and $F_2$ in~\eqref{ode01} are Lipschitz continuous on $u_1$ and $u_2$ if $f(u_{1,2})-\alpha t$ are away from $0$, and the lemma below shows that they keep being Lipschitz as long as $f(u_{1,2})-\alpha t>0$:
\begin{lemma}
Assume $u_{1,2}(t)$ solves (\ref{ode01}) with $f(u_{1,2}(t_1))-\alpha t_1>0$ for some $t_1$. Then there exists $c>0$ such that $f(u_{1,2}(t))-\alpha t>c$ for all $t>t_1$.
\end{lemma}
\begin{proof}
Using (\ref{ode01}), we obtain
\begin{equation}\label{fut}
\frac{\rd}{\rd{t}}(f(u_1)-\alpha t)=f'(u_1)\frac{\rd}{\rd{t}}u_1 - \alpha =\frac{\alpha }{2}(u_1-u_2)f'(u_1)\frac{1}{f(u_1)-\alpha t}-\alpha \,.
\end{equation}

Since $u_1$ is increasing, $u_2$ is decreasing, and $f'(u)>0$ is increasing for $u>u^\ast$, one has
\begin{equation}
\frac{1}{2}(u_1-u_2)f'(u_1) \ge [\frac{1}{2}(u_1-u_2)f'(u_1)]|_{t=t_1}=:c_1>0,\quad \forall t\ge t_1\,.
\end{equation}
According to the ODE~\eqref{fut},
\begin{equation}\label{fut1}
f(u_1)-\alpha t \ge \min\{c_1/2,(f(u_1)-\alpha t)_{t=t_1}\}=:c>0,\quad \forall t\ge t_1\,.
\end{equation}
In fact, if at any time $t$ one has $0<f(u_1)-\alpha t<\frac{2}{3}c_1$, then one has $\frac{\rd}{\rd{t}}(f(u_1)-\alpha t) \ge \alpha (c_1\frac{3/2}{c_1}-1) = \alpha /2$. Therefore starting from $t=t_1$, $f(u_1)-\alpha t$ keeps increasing unless it becomes larger than $\frac{2}{3}c_1$. This implies (\ref{fut1}).

The proof for $f(u_2)-t$ is similar.
\end{proof}


\subsection{Proof for item (3) in Proposition~\ref{thm:random_u_1}}
The proof is moved here merely because of the highly involved calculation. The idea still follows that for the rest of the proposition.
\begin{proof}
We use induction on $(k,k')$. Since we already have the cases $(k,0)$ (Proposition \ref{thm:random_u_1}), we may assume that all cases $(j,j')$ with $j<k$ and $j=k,\,j'\le k'$ are already proved, and then prove the case $(k,k'+1)$. Taking $k'$-th $t$-derivative of (\ref{eqn:der_zk}) gives
\begin{equation}\label{eqn:der_zk1}
\begin{split}
\partial_z^k \partial_t^{k'+1}u_1 & =  S_1^{k,k'} := \partial_t^{k'}(A_{11}\partial_z^k u_1 + A_{12}\partial_z^k u_2 + S_1^k)\,, \\
\partial_z^k \partial_t^{k'+1}u_2 & =  S_2^{k,k'}  := \partial_t^{k'}(A_{21}\partial_z^k u_1 + A_{22}\partial_z^k u_2 + S_2^k)\,.\\
\end{split}
\end{equation}
Notice that every term in $(A_{11}\partial_z^k u_1 + A_{12}\partial_z^k u_2 + S_1^k)$ is of the form (\ref{eqn:S_term}), with possibly  $r_1=k$ or $r_{5,j,l}=k$. Taking $\partial_t^{k'}$ of (\ref{eqn:S_term}) gives terms of the form
\begin{equation}\label{eqn:S_term1}
\frac{c\partial_z^{r_1}\partial_t^{r_1'}(u_1-u_2)}{(f(u_1)-t)^{1+r_2+r_2'}}\prod_{j=1}^{r_2'}\partial_t^{r_{3,j}'}(f(u_1)-t)   \prod_{j=1}^{r_2}\left( \partial_z^{r_{3,j}}\partial_u^{r_{4,j}+r_{4,j}'}f(u_1)\prod_{l=1}^{r_{4,j}'}\partial_t^{r_{5,j,l}'}u_1\prod_{l=1}^{r_{4,j}}\partial_z^{r_{5,j,l}}\partial_t^{r_{6,j,l}'}u_1\right)\,,
\end{equation}
with
\begin{equation}
r_1' + \sum_{j=1}^{r_2'}r_{3,j}' + \sum_{j=1}^{r_2}\left(\sum_{l=1}^{r_{4,j}'}r_{5,j,l}' + \sum_{l=1}^{r_{4,j}}r_{6,j,l}'\right) = k'\,.
\end{equation}
Here we can further write
\begin{equation}
\partial_t^{r_{3,j}'}(f(u_1)-t) = \sum \partial_u^{r_{7,j}'}f(u_1) \prod_{l=1}^{r_{7,j}'} \partial_t^{r_{8,j,l}'}u_1 - \chi_{(r_{3,j}'=1)},\quad \sum_{l=1}^{r_{7,j}'}r_{8,j,l}' = r_{3,j}'\,,
\end{equation}
where $\chi_{(r_{3,j}'=1)}$ means 1 when $r_{3,j}'=1$ and 0 otherwise.

Therefore, the power of $t$ of the term (\ref{eqn:S_term1}) is
\begin{equation}
\begin{split}
& \frac{1}{2}-r_1' - (1+r_2+r_2') + \sum_{j=1}^{r_2'}\left(1-\frac{r_{7,j}'}{2}+\sum_{l=1}^{r_{7,j}'} (\frac{1}{2}-r_{8,j,l}')\right) \\
&+ \sum_{j=1}^{r_2}\left(1-\frac{r_{4,j}+r_{4,j}'}{2}+\sum_{l=1}^{r_{4,j}'}(\frac{1}{2}-r_{5,j,l}')+\sum_{l=1}^{r_{4,j}}(\frac{1}{2}-r_{6,j,l}')\right) \\
= & \frac{1}{2}-r_1' - (1+r_2+r_2') + \sum_{j=1}^{r_2'}(1-\sum_{l=1}^{r_{7,j}'} r_{8,j,l}') + \sum_{j=1}^{r_2}(1-\sum_{l=1}^{r_{4,j}'}r_{5,j,l}'-\sum_{l=1}^{r_{4,j}}r_{6,j,l}') \\
= & -\frac{1}{2}-r_1'  - \sum_{j=1}^{r_2'}r_{3,j}' - \sum_{j=1}^{r_2}(\sum_{l=1}^{r_{4,j}'}r_{5,j,l}'+\sum_{l=1}^{r_{4,j}}r_{6,j,l}') = -\frac{1}{2}-k'= \frac{1}{2}-(k'+1)\,.\\
\end{split}
\end{equation}
Notice that in the case $r_{3,j}'=1$ the term $ \sum \partial_u^{r_{7,j}'}f(u_1) \prod_{l=1}^{r_{7,j}'} \partial_t^{r_{8,j,l}'}u_1$ has $t$ power 0, the same as the term $\chi_{(r_{3,j}'=1)}=1$, thus the latter can be ignored. This finishes the induction for $(k,k'+1)$.
\end{proof}

\subsection{Proof of Proposition \ref{thm_interp}}
We first state a classical result from approximation theory (~\cite{Trefethen_ATAP}, Theorem 7.2)
\begin{lemma}\label{lem_tref}
 For an integer $m \ge 1$, let $f=f(z)$ and its derivatives through 
 $f^{(m -1)}$ be absolutely continuous on $[-1,1]$ and suppose the
 $m$-th derivative $\partial_z^m f$ is of bounded variation $V$. Then for any $n>m$, its Chebyshev interpolants (\ref{poly_interp}) satisfy
 $$ \|f-f^N\|_{L^\infty} \le {4V\over \pi m  (N-m )^m }. \eqno (7.5) $$
\end{lemma}

With this, we can show:
\begin{proof}[Proof of Proposition \ref{thm_interp}]
(\ref{thm_interp_1}) follows from Lemma \ref{lem_tref} by noticing that $V\le 2\|\partial_z^{m+1}f\|_{L^\infty}$ and $N-m\ge N/2$ if $N\ge 2m$. To see (\ref{thm_interp_2}), we use:
\begin{align*}
|\mathbb{E}(f)-\mathbb{E}(f^N)| &= |\int (f^N(z)-f(z))\pi(z)\rd{z}| \\
&\le \frac{C(m)\|\partial_z^{m+1}f\|_{L^\infty}}{N^m} \int\pi(z)\rd{z} = \frac{C(m)\|\partial_z^{m+1}f\|_{L^\infty}}{N^m}\,.
\end{align*}
To show~\eqref{thm_interp_3} is similar: 
\begin{equation}\label{thm_interp_3_1}
\begin{split}
|\textnormal{var}(f)-\textnormal{var}(f^N)| \le & \int |f^N(z)^2 - f(z)^2|\pi(z)\rd{z} + | (\mathbb{E}(f^N))^2-(\mathbb{E}(f))^2| \\ 
\le & \|f^N+f\|_{L^\infty}\|f^N-f\|_{L^\infty} + |\mathbb{E}(f^N) + \mathbb{E}(f)|\cdot|\mathbb{E}(f^N) - \mathbb{E}(f)| \\
\le & (2\|f\|_{L^\infty}+\|f^N-f\|_{L^\infty})\|f^N-f\|_{L^\infty} +(2\|f\|_{L^\infty}+\|f^N-f\|_{L^\infty})|\mathbb{E}(f^N) - \mathbb{E}(f)|\\ 
\le & \frac{C(m)(\|f^N-f\|_{L^\infty}+\|f\|_{L^\infty})\|\partial_z^{m+1}f\|_{L^\infty}}{N^m}\,,
\end{split}
\end{equation}
where in the third inequality we used 
\begin{equation}
\|f^N+f\|_{L^\infty} \le \|f^N-f\|_{L^\infty} + 2\|f\|_{L^\infty}\,,
\end{equation}
and in the last inequality we used (\ref{thm_interp_1}). To finally obtain~\eqref{thm_interp_3}, we define the piecewise linear function $f_1(z)$ by
\begin{equation}
f_1(z) = f(z_j) + \frac{f(z_{j+1})-f(z_j)}{z_{j+1}-z_j}(z-z_j),\quad \text{ for }z_j\le z < z_{j+1}\,,
\end{equation}
so that $f_1$ is absolutely continuous and satisfies $f_1(z_j)=f(z_j)$ for every $j$. Thus its Chebyshev interpolant is also $f^N$. Since
\begin{equation}
f_1'(z) = \frac{f(z_{j+1})-f(z_j)}{z_{j+1}-z_j},\quad \text{ for }z_j\le z < z_{j+1}\,,
\end{equation}
is a piecewise constant function, whose total variation is
\begin{equation}
\begin{split}
V = & \sum_{j=1}^{N-1}|\frac{f(z_{j+1})-f(z_j)}{z_{j+1}-z_j} - \frac{f(z_j)-f(z_{j-1})}{z_j-z_{j-1}}| \le  2\sum_{j=0}^{N-1} |\frac{f(z_{j+1})-f(z_j)}{z_{j+1}-z_j}| \\ & \le 4N\|f\|_{L^\infty} \sum_{j=0}^{N-1} \frac{1}{z_{j+1}-z_j} \le C\|f\|_{L^\infty}N^3 \,,
\end{split}
 \end{equation} 
where we used $z_{j+1}-z_j \ge \frac{C}{N^2}$, which is easily checked by using the mean value theorem for the function $\cos z$. Therefore, Lemma \ref{lem_tref} for $f_1$ with $m=1$ gives
\begin{equation}
\|f^N\|_{L^\infty} \le C\|f\|_{L^\infty}N^2\,,
\end{equation}
and thus
\begin{equation}
\|f^N-f\|_{L^\infty} \le C\|f\|_{L^\infty}N^2\,.
\end{equation}
This combined with (\ref{thm_interp_3_1}) gives (\ref{thm_interp_3}).
\end{proof}

\bibliographystyle{siam}
\bibliography{shock_random}

\end{document}